\newcommand{\R}{{\mat R}}
\newcommand{\C}{{\mat C}}
\newcommand{\ds}{\displaystyle}
\newcommand{\no}{\nonumber}
\newcommand{\be}{\begin{eqnarray}}
\newcommand{\ben}{\begin{eqnarray*}}
\newcommand{\en}{\end{eqnarray}}
\newcommand{\enn}{\end{eqnarray*}}
\newcommand{\pa}{\partial}
\newcommand{\ov}{\overline}
\newcommand{\G}{\Gamma}
\newcommand{\Om}{\Omega}
\newcommand{\al}{\alpha}
\newcommand{\wi}{\widetilde}
\newcommand{\mat}{\mathbb}
\newcommand{\se}{\setminus}
\newcommand{\la}{\lambda}
\newtheorem{theorem}{Theorem}[section]
\newtheorem{lemma}[theorem]{Lemma}
\newtheorem{remark}[theorem]{Remark}
\begin{document}
\renewcommand{\theequation}{\arabic{section}.\arabic{equation}}

\title{\bf A linear sampling method for inverse acoustic scattering by a locally rough interface
%A linear sampling method for inverse acoustic scattering by a locally rough surface in dielectric media
}

\author{Jianliang Li\thanks{School of Mathematics and Statistics, Changsha University of Science
and Technology, Changsha 410114, P.R. China ({\tt lijl@amss.ac.cn})}
\and
Jiaqing Yang\thanks{School of Mathematics and Statistics, Xi'an Jiaotong University,
Xi'an, Shaanxi 710049, China ({\tt jiaq.yang@mail.xjtu.edu.cn})}
\and
Bo Zhang\thanks{LSEC, NCMIS and Academy of Mathematics and Systems Science, Chinese Academy
of Sciences, Beijing 100190, China and School of Mathematical Sciences, University of Chinese
Academy of Sciences, Beijing 100049, China ({\tt b.zhang@amt.ac.cn})}
}
\date{}

\maketitle

\begin{abstract}
This paper is concerned with the inverse problem of time-harmonic acoustic scattering by an unbounded,
locally rough interface which is assumed to be a local perturbation of a plane.
The purpose of this paper is to recover the local perturbation of the interface from the near-field
measurement given on a straight line segment with a finite distance above the interface and
generated by point sources. Precisely, we propose a novel version of the linear sampling method
to recover the location and shape of the local perturbation of the interface numerically.
Our method is based on a modified near-field operator equation associated with a special rough surface,
constructed by reformulating the forward scattering problem into an equivalent integral equation
formulation in a bounded domain, leading to a fast imaging algorithm.
Numerical experiments are presented to illustrate the effectiveness of the imaging method.

\vspace{.2in}

{\bf Keywords:} Inverse acoustic scattering, locally rough interface,
Lippmann-Schwinger integral equation, the linear sampling method.

\end{abstract}

\maketitle

\section{Introduction}

Consider the inverse scattering problem of determining an unbounded rough surface in
a two-layered dielectric media from near-field measurements. This kind of problems plays a
fundamental role in diverse scientific areas such as radar and sonar detection, underwater
exploration and non-destructive testing. In such applications, one tries to recover the shape and
location of unknown surfaces by taking the scattered wave-field measurements in certain domain.

Precisely, we consider the scattering of a time-harmonic line source
by an unbounded rough surface. The surface is assumed to
be a local perturbation of a plane which separates the whole space into two unbounded parts.
We are restricted to the two-dimensional case by assuming that the interface and the line source
are invariant in the $x_3$ direction.
Then the wave motion is governed by the two-dimensional Helmholtz equation with the wavenumber
described by a piecewise constant function. Due to the assumption of the interface,
the Sommerfeld radiation condition remains valid to describe the behavior of the scattered wave
away from the rough interface. Given the incident wave (which is a point source in 2D) and the rough
interface, the forward problem is to find the distribution of the scattered wave-field in the whole space.
Many works have been done to study the existence of unique solutions to the forward scattering problem.
We refer the reader to \cite{SE10,MT06,LYZ13,DTS03,QZZ19,ZS03,ZZ13} with either the variational technique
or the boundary integral equation method.

We are mainly interested in the inverse problem of recovering the shape and location of the unknown
interface from the scattered near-field measurements. In \cite{YLZ17}, a global uniqueness theorem
has been proved in a more general case for simultaneously determining the locally rough interface together
with the wavenumber and embedded obstacles in the lower half-space from the near-field measurements
in the upper-half space.
Following this uniqueness result, in this paper we aim to develop an efficient sampling-type method to
solve the associated inverse problem of recovering the unbounded rough interface in the case with no 
embedded obstacles. 
For the case of unbounded impenetrable surfaces, the above inverse problem has
been extensively studied numerically, most of which focused on iterative optimization algorithms
under some a priori information on the surface. We refer to \cite{CR10} for a Kirsch-Kress approach,
\cite{LB13} for a nonlinear integral equation method, \cite{BGL11,GJ11,GJ13,CP02,QZZ19,RM15,ZZ13} for
Newton-type iterative algorithms, and \cite{GL13} for an inversion algorithm based on the transformed 
field expansion in the case when the rough surface is a small and smooth perturbation of a plane. 
In addition, several non-iterative sample methods have also been studied
for locating the unbounded rough surface, such as a time domain point source method \cite{LC05},
the factorization method \cite{AL08} and the linear sampling method \cite{DLLY17} for the case
with a Dirichlet condition on the surface.
It is noted that the inversion algorithm in \cite{AL08} was shown to be valid
theoretically only for $\kappa f_+\in (0,\sqrt{2})$, where $\kappa>0$ stands for the wavenumber
and $f_+$ denotes the amplitude of the rough surface. It remains open to show the effectiveness of
the factorization method for the unbounded surface in a general case for $\kappa\in\R_+$.
If the surface is considered to be a penetrable interface in dielectric media, few inversion algorithms
are available for recovering the shape and location of the surface. 
A Kirsch-Kress method was proposed in \cite{LZ13} for reconstructing a locally perturbed plane interface, 
making use of near-field measurements both above and below the interface. An Newton-type iteration algorithm
was introduced in \cite{CG11} to reconstruct a locally rough interface from far-field measurements. 
Further, the inversion algorithm proposed in \cite{GL13} was extended in \cite{GL14} to deal with the case 
where the rough surface is a small and smooth perturbation of a plane interface in dielectric media.
Recently, a direct imaging method has been proposed to reconstruct an unbounded rigid 
rough surface either from the elastic scattered near-field Cauchy data generated by point sources in \cite{LZZ19} 
or from the elastic scattered near-field data generated by elastic plane waves in \cite{HLZZ19} and
to recover an impenetrable or penetrable unbounded rough surface either from the acoustic scattered 
near-field Cauchy data generated by point sources in \cite{LZZ18} or from the acoustic scattered near-field 
data generated by plane waves in \cite{Zhang20}.
Further, a direct imaging algorithm was developed in \cite{XZZ19} for recovering an unbounded Dirichlet rough 
surface from phaseless near-field data generated by plane waves. 
However, it seems difficult to develop a linear sampling method for locating an unbounded interface in 
dielectric media. As far as we know, no such a result is available so far.

In this paper we will investigate the linear sampling method (LSM) as an analytical as well as a numerical
tool to solve the inverse scattering problem of recovering a locally rough interface in dielectric media
from near-field measurements in a certain domain. It is well-known that the classical LSM was first
introduced \cite{DA96} for inverse acoustic scattering by bounded obstacles and has been extended to
many other inverse problems since then (see, e.g., \cite{CK13}) since the reduced algorithm is fast and
does not need any a priori knowledge on the scatterers.
We remark that a modified version of the LSM was recently proposed in \cite{DLLY17}, where an auxiliary
rough surface is introduced in order to recover a locally rough surface with a Dirichlet boundary condition.
Partially motivated by \cite{DLLY17} and \cite{YLZ17}, we reformulate the interface scattering problem into
an equivalent integral equation formulation in a bounded domain by introducing a class of special rough
surfaces, where the well-posedness easily follows from the classical Riesz-Fredholm alternative.
With this technique, a modified near-field operator equation associated with a special surface
will be constructed, which is proved in a strict way to be valid for recovering the shape and location of
the locally rough interface, leading to a fast imaging algorithm. As two related works, we also
refer the reader to \cite{YZZ1,YZZ2} for a periodical version.

The remaining part of the paper is organized as follows. In Section \ref{sec2}, we introduce the
mathematical model for the forward scattering problem.
In Section \ref{sec3}, we propose a novel version of the classical LSM by constructing a modified
near-field operator equation associated with a special rough surface.
In Section \ref{sec4}, numerical experiments are conducted to demonstrate the effectiveness
of the proposed method.

\section{The mathematical model}\label{sec2}
\setcounter{equation}{0}

We now introduce the mathematical formulation of the scattering problem by an unbounded
rough interface in two dimensions. The unbounded interface is denoted by
the curve $\G=\{(x_1,x_2)\in\R^2: x_2=f(x_1), x_1\in\R^1\}$, where $f\in C^{2,\alpha}(\R^1)$,
$\alpha\in (0,1)$, is a smooth function (see Figure \ref{f1}), where
the function $f$ is assumed to satisfy the condition that
there exists a positive constant $M>0$ such that
\be\label{Condition-A}
f(x_1)=0\qquad {\rm for}\;\;\;|x_1|\geq M.
\en
This condition means that the interface $\G$ is only different from the plane
$\G_0=\{(x_1,x_2)\in\R^2: x_2=0\}$ for $x_1$ in a finite interval $(-M,M)$. \
Let $\Om_1:=\{(x_1,x_2)\in\R^2: x_2>f(x_1), x_1\in\R^1\}$ denote
the unbounded domain above $\Gamma$ which is filled with a homogeneous material characterized
by a constant wavenumber $\kappa_1>0$. Denote by $\Om_2:=\R^2\se\ov{\Om_1}$ the complement of $\Om_1$
in $\R^2$ which is filled with another homogeneous material characterized by a different
wavenumber $\kappa_2>0$.

\begin{figure}[htbp]
\centering
\includegraphics[scale=0.5]{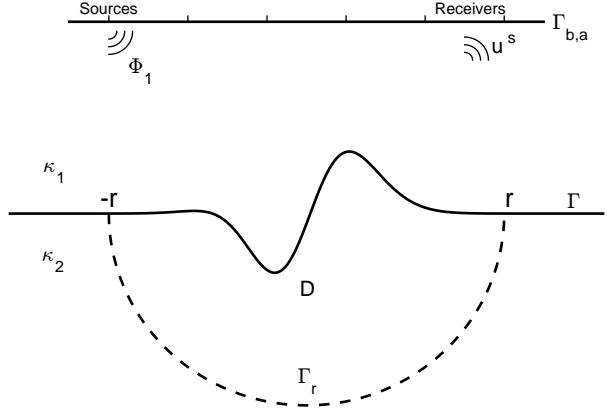}
\caption{The physical configuration of the scattering problem}\label{f1}
\end{figure}

Consider an incoming wave induced by the point source in 2D
\ben
u^{\rm i}(x) = \Phi_{\kappa_1}(x,y):=\frac{\rm i}{4}H_0^{(1)}(\kappa_1|x-y|)\qquad
{\rm for\;}\;\;y\in\Om_1,
\enn
where $H_0^{(1)}(\cdot)$ is the Hunkel function of the first kind of order zero and
$\Phi_{\kappa_1}(\cdot,\cdot)$ denotes the fundamental solution of the two-dimensional Helmholtz equation
satisfying $\Delta \Phi_{\kappa_1}+\kappa_1^2\Phi_{\kappa_1}=-\delta_y$ in $\R^2$
with $\delta_y(\cdot)=\delta(\cdot-y)$. The associated wavelength of the incident wave is then given
by $\la=2\pi/\kappa_1$. Thus the scattering problem of $u^{\rm i}$ by $\G$ is modelled
by the Helmholtz equation
\be\label{a1}
\Delta u+\kappa^2(x)u=-\delta_y\qquad{\rm in}\;\;\; \R^2,
\en
where $u:=u^{\rm i}+u^{\rm s}$ denotes the total field in $\Omega_1$ consisting of the incident field
$u^{\rm i}$ and the scattered field $u^{\rm s}$, and $u:=u^{\rm s}$ in $\Omega_2$ denotes
the transmitted field. Moreover, $\kappa(x)$ is defined as $\kappa(x):=\kappa_1$ for $x\in\Omega_1$
and $\kappa(x):=\kappa_2$ for $x\in\Omega_2$.

Due to the condition (\ref{Condition-A}) on the surface $\G$, the Sommerfeld radiation condition
\be\label{a2}
\ds\lim_{|x|\rightarrow \infty}\sqrt{|x|}\left(\frac{\partial u^{\rm s}}{\partial |x|}
-{\rm i}\kappa u^{\rm s}\right)=0
\en
remains valid to describe the asymptotic behavior of the scattered field $u^{\rm s}$ away from $\G$.
It is remarked that the Sommerfeld radiation condition (\ref{a2}) should be replaced by the much weaker
Upward and Downward Propagating Radiation Conditions (cf. \cite{SZ99}) for the globally rough interface case,
which can be shown to be equivalent to (\ref{a2}) for the locally rough interface case. The reader is referred
to \cite{DLLY17,LZ13} for more detailed discussions.

Uniqueness of solutions to the scattering problem (\ref{a1})-(\ref{a2}) can be found in \cite{SZ99}
(see Theorem 5.1 of \cite{SZ99}), where a more complicated scattering problem was dealt with for an infinite
inhomogeneous conducting or dielectric layer at the interface.
The existence of solutions to the problem (\ref{a1})-(\ref{a2}) can be established by the integral equation
method using the Green function of the two-layered medium, that is, the fundamental solution of the
unperturbed problem ($f=0$; cf. \cite{Li2010}).
It can also be proved by either using the boundary integral equation method in \cite{DTS03} or
reducing (\ref{a1})-(\ref{a2}) into an equivalent Lippmann-Schwinger integral equation in a bounded domain
based on the Green function $G_r$ below (see \cite{YLZ17}).

To propose our sampling method in this paper, we follow \cite{YLZ17} to introduce a special class of rough
interfaces $\G_r$ (see Figure \ref{f1}) defined by
\ben
\G_r:=\{(x_1,x_2)\in\R^2: x_2=0\quad{\rm for}\;\;|x_1|\geq r,\;\;{\rm and}\;\; x_2=-\sqrt{r^2-x_1^2}\quad{\rm for}\;\;|x_1|< r\}
\enn
for each fixed $r>0$. It can be seen that $\G_r$ differs from the exact scattering interface $\G$ only in
a finite interval. Let $\Omega_{1,r}$ and $\Omega_{2,r}$ denote the unbounded domains above and below
the interface $\G_r$, respectively. Throughout this paper, we choose a sufficiently large $r>M$ such that
the local perturbation of $\G$ lies totally inside the region $\Omega_{1,r}$, that is,
$\Omega_1\subset\Omega_{1,r}$.

Consider the scattering of the incident point source $\Phi_{\kappa_1}(\cdot, y)$ for
$y\in\Omega_{1,r}$ by $\G_r$, which is to find the solution $G_r(\cdot,y)$ satisfying the problem
\be\label{c1}
\left\{\begin{array}{ll}
\Delta G_r(x,y)+\kappa_r^2(x)G_r(x,y)=-\delta_y& {\rm in}\;\;\R^2\\[2mm]
\ds\lim_{|x|\rightarrow\infty}\sqrt{|x|}\left(\frac{\pa G_r(x,y)}{\pa |x|}-{\rm i}\kappa_r G_r(x,y)\right)=0&
\end{array}\right.
\en
where the wavenumber $\kappa_r(x)$ is defined as $\kappa_r(x):=\kappa_j$ for $x\in\Omega_{j,r}$, $j=1,2$. It follows from \cite{YLZ17} that Problem (\ref{c1}) is uniquely
solvable for each fixed $y\in\Om_{1,r}$.
It is also well-known that $G_r(\cdot,\cdot)$ corresponds to the
Green's function associated with the special interface $\G_r$. Similar to (\ref{a1})-(\ref{a2}),
$G_r(\cdot,y)$ can be decomposed into the sum of the incident field $\Phi_{\kappa_1}(\cdot, y)$
and its scattered field $G_r^{\rm s}(\cdot,y)$ in $\Om_{1,r}$ for $y\in\Om_{1,r}$, i.e.,
$G_r(\cdot,y)=\Phi_{\kappa_1}(\cdot,y)+G_r^{\rm s}(\cdot,y)$ in $\Om_{1,r}$.

Next, we shall briefly derive an equivalent integral equation of Problem (\ref{a1})-(\ref{a2})
with the help of Problem (\ref{c1}), following the idea in \cite{YLZ17}. To this end,
denote by $D=\Om_2\cap\Om_{1,r}$ the intersection between $\G_r$ and $\G$. Applying
the Green's formula for the difference $v_r(x; y):=u(x,y)-G_r(x,y)$, it can be verified that
the solution $u(\cdot,y)$ to Problem (\ref{a1})-(\ref{a2}) satisfies the Lippmann-Swinger integral
equation in $D$:
\be\label{c3}
u(\cdot,y)+\eta\int_{D}G_r(\cdot,z)u(z,y){\rm d}z=G_r(\cdot,y)\qquad {\rm for\;\;}
\eta:=\kappa_1^2-\kappa_2^2.
\en
Define the map $T: L^2(D)\rightarrow L^2(D)$ by
\ben
(T\phi)(x):=\int_{D}G_r(x,z)\phi(z){\rm d}z\qquad{\rm for}\;\; x\in D.
\enn
Then the equation (\ref{c3}) can be rewritten in the operator equation form:
\be\label{c4}
(I+\eta T)u(\cdot,y)=G_r(\cdot,y)
\en

By a similar discussion as in Theorem 8.3 of \cite{CK13}, we have the following equivalence result
between the solvability of Problems (\ref{a1})-(\ref{a2}) and (\ref{c4}).

\begin{theorem}\label{thm2.1}
For $y\in\Om_1$, if $u(\cdot,y)$ is a solution of (\ref{a1})-(\ref{a2}), then $u(\cdot,y)|_{D}$ is a solution of (\ref{c4}). Conversely, if $u(\cdot,y)$ is a solution of (\ref{c4}), then
\ben
u(x,y):=G_r(x,y)-\eta\int_{D}G_r(x,z)u(z,y){\rm d}z\qquad {\rm for}\;\; x\in\R^2,
\enn
is a solution of (\ref{a1})-(\ref{a2}). Furthermore, the map $I+\eta T$ is an isomorphism on $L^2(D)$.
\end{theorem}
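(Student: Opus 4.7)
The plan is to mirror the standard Lippmann–Schwinger equivalence argument (as in Theorem 8.3 of \cite{CK13}) but with the background Green's function $\Phi_{\kappa_1}$ replaced by the two-layer Green's function $G_r$ associated with the auxiliary interface $\G_r$. The key observation driving everything is that since $\Om_1\subset\Om_{1,r}$, the wavenumber $\ka(x)$ agrees with $\ka_r(x)$ everywhere except on the bounded set $D=\Om_2\cap\Om_{1,r}$, where $\ka=\ka_2$ and $\ka_r=\ka_1$, so $\ka_r^2-\ka^2=\eta$ on $D$ and vanishes elsewhere.

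For the forward direction, I set $v_r(\cdot,y):=u(\cdot,y)-G_r(\cdot,y)$. Subtracting the two Helmholtz equations cancels the Dirac source and yields
\be\no
\Delta v_r+\ka_r^2 v_r=(\ka_r^2-\ka^2)u=\eta\,u\,\chi_D\qquad\text{in }\R^2,
\en
while $v_r$ inherits the Sommerfeld radiation condition (\ref{a2}) from both $u^{\rm s}$ and $G_r^{\rm s}$. Representing $v_r$ by a volume potential with kernel $G_r$—which is legitimate because $G_r(x,\cdot)$ is the outgoing fundamental solution for the operator $\Delta+\ka_r^2$ in the presence of $\G_r$—gives
\ben
v_r(x,y)=-\eta\int_{D} G_r(x,z)\,u(z,y)\,{\rm d}z,
\enn
which, after adding $G_r(\cdot,y)$ and restricting to $D$, is exactly (\ref{c3}), equivalently (\ref{c4}).

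For the converse, starting from a solution $u\in L^2(D)$ of (\ref{c4}), I define $\wi u(x):=G_r(x,y)-\eta\int_D G_r(x,z)u(z,y)\,{\rm d}z$ on all of $\R^2$. Standard volume potential regularity (weakly singular logarithmic kernel on a bounded domain) shows that $\wi u$ is $C^{1,\alpha}_{\rm loc}$ across $\pa D$ and locally $C^{2,\alpha}$ in $D$ and in $\R^2\se\ov D$. A direct differentiation, using $\Delta_x G_r(x,z)+\ka_r^2(x) G_r(x,z)=-\delta_z$, confirms that $\wi u$ satisfies $\Delta \wi u+\ka^2\wi u=-\delta_y$ in $\R^2\se\pa D$; the jump relations of the volume potential are continuous enough to extend this as a distributional identity on $\R^2$. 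The radiation condition (\ref{a2}) is inherited from that of $G_r$ and of the outgoing volume potential on the bounded set $D$. Checking equality of $\wi u$ with the given $u$ on $D$ is immediate from (\ref{c3}).

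Finally, for the isomorphism statement I appeal to the Riesz–Fredholm theory. The operator $T$ has the weakly singular kernel $G_r(x,z)$ on the bounded domain $D$, so $T:L^2(D)\to L^2(D)$ is compact. To obtain injectivity of $I+\eta T$, I feed any solution $u_0\in L^2(D)$ of $(I+\eta T)u_0=0$ into the formula of the converse step (with the $G_r(\cdot,y)$ term dropped): this produces a global radiating solution of the source-free version of (\ref{a1})--(\ref{a2}), which must vanish by the uniqueness result cited from \cite{SZ99}, hence $u_0=0$ on $D$. The Fredholm alternative then upgrades injectivity to invertibility. The main technical obstacle I anticipate is justifying the differentiation across $\pa D$ in the converse direction and controlling the jump/trace relations of the volume potential with the non-standard kernel $G_r$; once the correct mapping properties of $T$ (compactness, $C^{1,\alpha}$ smoothing, correct behavior at $\pa D$) are established, the remaining arguments are essentially formal adaptations of the classical case.
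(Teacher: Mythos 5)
Your proposal is correct and follows exactly the route the paper intends: the paper gives no written proof of Theorem \ref{thm2.1}, only the remark that it follows ``by a similar discussion as in Theorem 8.3 of \cite{CK13}'', and your argument is precisely that classical Lippmann--Schwinger equivalence with $\Phi_{\kappa_1}$ replaced by the two-layer Green's function $G_r$, driven by the observation that $\kappa_r^2-\kappa^2=\eta\chi_D$. The only point to keep straight in a full write-up is the order of steps in the converse direction (one first gets $\Delta\widetilde u+\kappa_r^2\widetilde u=-\delta_y+\eta u\chi_D$ and must use $\widetilde u=u$ on $D$, which follows from (\ref{c3}), before converting $\kappa_r$ to $\kappa$), but you have all the ingredients.
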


\section{The inverse scattering problem}\label{sec3}
\setcounter{equation}{0}

Based on Theorem \ref{thm2.1}, we study in this section the LSM as an analytical as well as a numerical tool to solve the inverse scattering problem of recovering the shape and location of
the unbounded interface $\G$ by the knowledge of scattered fields $u^{\rm s}(\cdot,y)$ on the
measurement segment $\G_{b,a}:=\{(x_1,x_2)\in\R^2: x_2=b, |x_1|<a\}$. Here, $a>0$ and
$b\in\R$ such that $b>f_+:=\sup_{x_1\in\R}f(x_1)$. Similar to the bounded case \cite{DA96},
our objective is to define an indicator function $I(z)$ by the $L^2$ norm of the solution to
the first kind of integral equation associated with the near-field operator
${\bf N}:L^2(\Gamma_{b,a})\rightarrow L^2(\Gamma_{b,a})$:
\be\label{d1}
({\bf N}g)(x): = \int_{\G_{b,a}}u^{\rm s}(x,y)g(y){\rm d}s(y)\qquad \textrm{for}\;\; x\in\G_{b,a},
\en
or its modified version, which can provide a fast imaging algorithm for $\G$. In (\ref{d1}) $u^{\rm s}(\cdot,y)$ indicates the scattered solution to Problem (\ref{a1})-(\ref{a2}) for the incident point
source $\Phi_{\kappa_1}(\cdot,y)$.

Recalling for the bounded obstacle case, the far-field operator $F$ could be decomposed into
$F=\Lambda H$ with $\Lambda$ a solution operator and $H$ an incidence operator for the obstacle scattering problem. However, it becomes very difficult to directly obtain such decomposition in the unbounded rough surface case especially for a penetrable interface.
To overcome this difficulty, we instead consider a modified near-field operator
 ${\bf N}_{\rm Mod}:L^2(\G_{b,a})\rightarrow L^2(\G_{b,a})$:
 \be\label{d22}
({\bf N}_{\rm Mod}g)(x):=\int_{\G_{b,a}}(u^{\rm s}(x,y)-G_r^{\rm s}(x,y))g(y){\rm d}s(y)\qquad \textrm{for}\;\; x\in\G_{b,a}
\en
by the difference $v_r(x,y):=u(x,y)-G_r(x,y)$ which is easily checked from (\ref{a1})-(\ref{a2}) and (\ref{c1}) to satisfy the boundary value problem
\be\label{c2}
\left\{\begin{array}{llllll}
\Delta v_r(\cdot,y)+\kappa_1^2v_r(\cdot,y)=0 & {\rm in}\;\; \Om_1\\[2mm]
\Delta v_r(\cdot,y)+\kappa_2^2v_r(\cdot,y)=\beta(\cdot) &{\rm in}\;\; D\\[2mm]
\Delta v_r(\cdot,y)+\kappa_2^2v_r(\cdot,y)=0 &{\rm in}\;\; \Om_{2,r}\\[2mm]
[v_r(\cdot,y)] =[\pa_{n} v_r(\cdot,y)]=0& {\rm on}\;\; \G\cup\G_r\\[2mm]
\ds\lim_{|x|\rightarrow \infty}\sqrt{|x|}\left(\frac{\pa v_r(\cdot,y)}{\pa |x|}-{\rm i}\kappa v_r(\cdot,y)\right)=0&\\[2mm]
\end{array}\right.
\en
where $\beta(\cdot):=\eta G_r(\cdot,y)$, and $[\cdot]:=\cdot|_{+}-\cdot|_{-}$ with $\cdot|_{\pm}$ indicating  the limits of $\cdot$ from the upward and downward of $\G$ or $\G_{r}$, respectively.
The well-posedness of Problem (\ref{c2}) easily follows from the well-posedness of the original scattering problem (\ref{a1})-(\ref{a2}).
Notice that Problem (\ref{c2}) may be viewed as the scattering by a bounded inhomogeneous
medium $D$ with its boundary containing the local perturbation of the interface $\G$.
Therefore, our sampling method will be based on studying the following integral equation
\be\label{d2}
({\bf N}_{\rm Mod}g)(x)=G_r(x,z)|_{\G_{b,a}}\qquad {\rm for}\;\;z\in\R^2.
\en
It is expected to define an indicator function $I_{\rm Mod}(z)$ by the $L^2$ norm of the density $g$ in  (\ref{d2}), which asymptotic behavior could be used to recover the shape and location of $D$ or $\G$.

To illustrate this point, we introduce the function
\be\label{d3}
v_g(x): = \int_{\G_{b,a}}G_r(x,y)g(y){\rm d}s(y)\qquad x\in D,
\en
for $g\in L^2(\G_{b,a})$ which clearly satisfies the Helmholtz equation $\Delta v_g+\kappa_1^2 v_g=0$ in $D$. Define the space $X:=\{v_g|_{D}: g\in L^2(D)\}$ and
denote by $\ov{X}$ the closure of $X$ in the sense of $L^2(D)$ norm. Moreover,
we introduce the operator $F: L^2(D)\rightarrow L^2(\G_{b,a})$ defined by
\ben
(Fh)(x): = -\eta\int_{D}G_r(x,z)h(z){\rm d}z\qquad{\rm for}\;\; x\in\G_{b,a}.
\enn
By combining equation (\ref{c4}) and Theorem \ref{thm2.1}, we have the following relation
\be\no
({\bf N}_{\rm Mod}g)(x)&=&\int_{\Gamma_{b,a}}(Fu(\cdot,y))(x)g(y){\rm d}s(y)\\\label{d4}
&=&\int_{\Gamma_{b,a}}F(I+\eta T)^{-1}G_r(x,y)g(y){\rm d}s(y)\\\no
&=&F(I+\eta T)^{-1}v_g(x)
\en
from the definition (\ref{d22}). Thus, the near-field operator equation (\ref{d2}) can be
reformulated as
\be\label{d5}
F(I+\eta T)^{-1}v_g(\cdot)=G_r(\cdot,z)|_{\G_{b,a}}\qquad {\rm for} \;\;z\in\R^2.
\en

In order to investigate the solvability of equation (\ref{d5}), similar to the bounded case, we introduce the interior transmission problem in finding a pair of functions $(v,w)\in L^2(D)\times L^2(D)$ such that  $(v,w)$ satisfies the Helmholtz equations
\be\label{d6}
\Delta v+\kappa_1^2v=0,\quad \Delta w+\kappa_1^2q w=0\qquad{\rm in}\;\; D,
\en
and the transmission conditions
\be\label{d7}
w-v=G_r(\cdot,z)\quad \frac{\pa w}{\pa n}-\frac{\pa v}{\pa n}=\frac{\pa G_r(\cdot,z)}{\pa n}
\qquad{\rm on }\;\;\pa D,
\en
for $z\in D$ and $q:=\kappa_2^2/\kappa_1^2$. It is well-known that $\kappa_1^2>0$ is
called a transmission eigenvalue if the homogeneous problem (\ref{d6})-(\ref{d7}) has a pair
of nontrivial solutions $(v,w)\in L^2(D)\times L^2(D)$ such that $v-w\in H_0^2(D)$.
For simplicity, we always assume throughout the paper that {\em $\kappa_1^2>0$ is not a transmission eigenvalue}. Under this assumption, we conclude from \cite{CGH10} that Problem (\ref{d6})-(\ref{d7}) has a unique solution $(v(\cdot,z),w(\cdot,z))\in L^2(D)\times L^2(D)$ for
$z\in D$.

Furthermore, we introduce the space $Y$ defined by
\ben
Y:=\{u\in L^2(D): \Delta u\in L^2(D),\;\Delta u+\kappa_1^2 u=0  \;{\rm in}\; D\;{\rm in\;the\;distributional\;sense}\}
\enn
that is closely related to the set $X$.  It is easily seen that both $\ov{X}$ and $Y$ are closed subspaces of the Hilbert space $L^2_{\Delta}(D):=\{u\in L^2(D): \Delta u\in L^2(D)\}$ equipped with the norm induced by the inner product
$(u,v)_{L^2_{\Delta}(D)}:=(u,v)_{L^2}+(\Delta u,\Delta v)_{L^2}$ for $u,v\in L^2_{\Delta}(D)$.
Hence, both $\ov{X}$ and $Y$ are also Hilbert spaces in the sense of the above norm. For further analysis in next subsections, we need to show that Problem
(\ref{d6})-(\ref{d7}) has a unique solution $(v(\cdot,z),w(\cdot,z))\in \ov{X}\times L^2(D)$ for each
$z\in D$. Thus, it is enough to show the coincidence between $\ov{X}$ and $Y$.
\begin{lemma}\label{lem}
If $\kappa_1^2>0$ is not a Dirichlet eigenvalue of $-\Delta$ in $D$, then $\ov{X}=Y$.
\end{lemma}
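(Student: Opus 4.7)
The plan is to establish both inclusions $\overline X\subset Y$ and $Y\subset\overline X$ inside $L^2(D)$. The first is immediate: for any $g\in L^2(\G_{b,a})$, the segment $\G_{b,a}$ lies outside $\overline D$, and $G_r(\cdot,y)$ satisfies $\Delta G_r+\kappa_1^2 G_r=0$ throughout the open set $\Omega_{1,r}\supset D$ away from $y$, so $v_g\in Y$; closedness of $Y$ in $L^2(D)$ then yields $\overline X\subset Y$.

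For the nontrivial direction $Y\subset\overline X$ I would use an orthogonal-complement argument. Suppose $\psi\in Y$ satisfies $\langle v_g,\psi\rangle_{L^2(D)}=0$ for every $g\in L^2(\G_{b,a})$; the target is $\psi=0$. Unfolding via Fubini and the reciprocity $G_r(x,y)=G_r(y,x)$ shows this is equivalent to $w\equiv 0$ on $\G_{b,a}$, where
\[
w(y):=\int_D G_r(y,x)\,\overline{\psi(x)}\,dx,\qquad y\in\R^2.
\]
Standard volume-potential theory provides $w\in H^2_{\mathrm{loc}}(\R^2)$, and $w$ is a radiating solution of $\Delta w+\kappa_r^2 w=-\overline\psi\,\chi_D$ in $\R^2$ obeying the transmission conditions $[w]=[\partial_n w]=0$ across $\G_r$.

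The core of the proof is propagating $w|_{\G_{b,a}}\equiv 0$ to $w\equiv 0$ on $\R^2\se\overline D$. Since $b>f_+$, the entire line $\{x_2=b\}$ lies in $\Omega_1\subset\Omega_{1,r}\se\overline D$, so $x_1\mapsto w(x_1,b)$ is real-analytic, and its vanishing on $(-a,a)$ forces $w(\cdot,b)\equiv 0$ on the whole line by one-dimensional analytic continuation. Regarded as a radiating Helmholtz solution in the half-space $\{x_2>b\}$ with zero Dirichlet data on $\{x_2=b\}$, uniqueness (via the Fourier representation) gives $w\equiv 0$ in $\{x_2>b\}$, and elliptic unique continuation in the connected set $\Omega_1$ extends this to $w\equiv 0$ in $\Omega_1$. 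On the flat portion $\{|x_1|>r,\,x_2=0\}$ of $\G_r$, the transmission conditions then transfer these vanishing Cauchy data to the $\Omega_{2,r}$-side, and Holmgren's uniqueness together with the connectedness of $\Omega_{2,r}$ yields $w\equiv 0$ in $\Omega_{2,r}$ as well.

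Combining this with $w\in H^2_{\mathrm{loc}}$ and the continuity of $w,\,\partial_n w$ across $\G\cup\G_r$ upgrades the vanishing of $w$ outside $D$ to the full Cauchy data $w|_{\partial D}=\partial_n w|_{\partial D}=0$ from the $D$-side, so $w|_D\in H^2_0(D)$ while $\Delta w+\kappa_1^2 w=-\overline\psi$ in $D$. Green's second identity applied to $w$ and $\psi$ (boundary terms vanish because $w\in H^2_0(D)$), combined with $\Delta\psi=-\kappa_1^2\psi$, collapses to
\[
0=\int_D\bigl(\psi\,\Delta w-w\,\Delta\psi\bigr)\,dx=-\int_D|\psi|^2\,dx,
\]
forcing $\psi=0$. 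The non-Dirichlet-eigenvalue hypothesis on $\kappa_1^2$ enters here to guarantee that the Dirichlet problem underlying this last step is well-posed, so that no spurious homogeneous $H^2_0(D)$-Helmholtz solution obstructs the collapse. The principal obstacle I anticipate is the unique-continuation chain extending $w|_{\G_{b,a}}=0$ out of $D$; the radiation condition is essential there, since examples like $\sin(\kappa_1 x_2)$ show that analyticity alone does not rule out nontrivial Helmholtz solutions vanishing on a line.
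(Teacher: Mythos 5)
Your proof is correct, but it takes a genuinely different route from the paper's. The paper first identifies $Y$ with the solution set of the Dirichlet problem (\ref{d13}) for $H^{-1/2}(\pa D)$ data (via a combined layer-potential ansatz and Riesz--Fredholm theory), reduces $Y\subset\ov{X}$ to the density of the traces $v_g|_{\pa D}$ in $H^{-1/2}(\pa D)$, and proves that by showing the adjoint operator $H^*$ is injective using a single-layer potential on $\pa D$. That detour through the boundary forces the paper to establish the reciprocity relation $G_r(x,y)=G_r(y,x)$ for $y\in\pa D$ --- including $y\in\G_r\se\G$, which is why it sets up and solves the auxiliary problem (\ref{x1}) --- and it invokes the non-eigenvalue hypothesis twice (for the solvability of (\ref{d13}) and to conclude $\tilde w=0$ in $D$). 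You instead test against the annihilator of $X$ in $L^2(D)$ and use a volume potential over $D$: you only need reciprocity for interior points $x\in D$, $y\in\G_{b,a}$ (the easy configuration), and your Green's-identity collapse to $-\int_D|\psi|^2\,dx$ requires no boundary value problem in $D$ to be solvable. One correction: your closing remark about where the eigenvalue hypothesis enters is off the mark --- your argument never actually uses it (the final identity forces $\psi=0$ unconditionally, and no Dirichlet problem is solved anywhere), which is consistent with the classical fact that Herglotz-type families are $L^2(D)$-dense in the Helmholtz solution space for every wavenumber. So your route in fact proves the lemma without the hypothesis; the hypothesis is an artifact of the paper's $H^{-1/2}(\pa D)$ trace argument (cf.\ Remark \ref{remark1}, where the authors note it can be removed by adjusting $r$). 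The ingredients you leave as standard --- interior reciprocity, the $H^2_{\mathrm{loc}}$ regularity of the volume potential and the fact that it solves the transmission problem (\ref{c2}) with the radiation condition, and uniqueness for the half-plane Dirichlet problem --- are indeed routine and are partly already available in the paper via (\ref{s1}), though they should be cited explicitly in a final write-up.
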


\begin{proof}
To prove the assertion, we shall first show that the space $Y$ can be described by the
solution of the Dirichlet problem
\be\label{d13}
\left\{\begin{array}{ll}
\Delta \hat{w}+\kappa_1^2\hat{w}=0 &\quad {\rm in}\;\; D,\\[2mm]
\hat{w}=\hat{g}\in H^{-\frac{1}{2}}(\pa D) &\quad {\rm on}\;\; \pa D.
\end{array}\right.
\en
Hence, we need to prove the uniquely solvability of Problem (\ref{d13}) in $L^2_\Delta(D)$ which
will be done by employing boundary integral equation techniques with seeking a solution
in the form of a combined double and single-layer potential
\be\label{d14}
\hat{w}(x) = \int_{\pa D}\left(\frac{\pa \Phi_{\kappa_1}(x,y)}{\pa n(y)}-{\rm i} \Phi_{\kappa_1}(x,y)\right)\psi(y){\rm d}s(y) \qquad {\rm{for}}\;\; x\in \R^2\se \pa D,
\en
where the density $\psi\in H^{-\frac{1}{2}}(\pa{D})$ and $n(y)$ stands for the normal derivative at $y\in\pa D$ directed into the exterior of $D$.
Then we deduce by the boundary condition in (\ref{d13}) that the potential $\hat{w}$ in (\ref{d14}) solve Problem (\ref{d13}), provided the density $\psi$ is a solution of a second-kind of integral equation
\be\label{d15}
(I+K-{\rm i}S)\psi = 2\hat{g}
\en
with the single- and double-layer operators $S$ and $K$, respectively, given by
\ben
(S\psi)(x): &=&  2\int_{\partial D}\Phi_{\kappa_1}(x,y)\psi(y)ds(y)\qquad\quad x\in\partial D,\\
(K\psi)(x): &=&  2\int_{\partial D}\frac{\pa \Phi_{\kappa_1}(x,y)}{\pa n(y)}\psi(y)ds(y)\qquad x\in\partial D.
\enn
It follows from Corollary 3.7 of \cite{CK13} that both $S$ and $K$ are bounded operators from
$H^{-\frac{1}{2}}(\pa D)$ into $H^{\frac{1}{2}}(\pa D)$ due to the piecewise $C^{2,\al}$-regularity
of $\pa D$. This, together with the compact imbedding of $H^{\frac{1}{2}}(\pa D)$ into $H^{-\frac{1}{2}}(\pa D)$,
implies that  $I+K-{\rm i}S: H^{-\frac{1}{2}}(\pa D)\rightarrow H^{-\frac{1}{2}}(\pa D)$ is of Fredholm type
with index $0$. The existence of a solution of equation (\ref{d15}) then follows from the Riesz-Fredholm
theory and the uniqueness of Problem (\ref{d13}) for each $\hat{g}\in H^{-1/2}(\pa D)$.
Hence, the space
$\{\hat{w}\in L^2_{\Delta}(D):\hat{w}{\rm\;solves\;Problem\;(\ref{d13})\;for\;}\hat{g}\in H^{-1/2}(\pa D)\}$
coincides with $Y$.

Now, by the trace theorem it is sufficient to prove that $v_g|_{\pa D}$ is dense in $H^{-1/2}(\pa D)$,
which is equivalent to show that the range ${\rm Range}(H)$ of $H: L^2(\G_{b,a})\rightarrow H^{-1/2}(\pa D)$
defined by
\ben
(Hg)(x):=\int_{\G_{b,a}}G_r(x,y)g(y){\rm d}s(y)\qquad x\in\pa D
\enn
is dense in $H^{-1/2}(\pa D)$.
By interchanging the order of integration, the dual operator $H^*:H^{1/2}(\pa D)\rightarrow L^2(\G_{b,a})$
of $H$ is given by
\ben
(H^*\psi)(x)&:=& \int_{\pa D}\ov{G_r(y,x)}\psi(y){\rm d}s(y)\\
            &=& \int_{\pa D}\ov{G_r(x,y)}\psi(y){\rm d}s(y)\qquad x\in\G_{b,a}.
\enn
Here, we have used the reciprocity relation $G_r(x,y)=G_r(y,x)$ which holds for all $x\in\G_{b,a}$
and $y\in\pa D$. Notice that the boundary $\pa D$ is composed of $\G\se\G_r$ and $\G_r\se \G$.
For the case $y\in\G\se\G_r$, it could be similarly proved by the proof of Lemma 3.1 of \cite{DLLY17}
that $G_r(x,y)=G_r(y,x)$ for $x\in\G_{b,a}$. For the case $y\in\G_r\se\G$, the function $G_r(\cdot,y)$ is
defined as $G_r(\cdot,y): =\Phi_{\kappa_1}(\cdot,y)+G_r^{1}(\cdot,y)$ in $\Om_{1,r}$ and
$G_r(\cdot,y):=G_r^{2}(\cdot,y)$ in $\Om_{2,r}$, where $(G^1_r(\cdot,y),G^2_r(\cdot,y))$ is the solution
to the scattering problem:
\be\label{x1}
\left\{\begin{array}{lllll}
\Delta G^1_r(\cdot,y)+\kappa_1^2 G^1_r(\cdot,y)=0&  {\rm in}\;\; \Om_{1,r}\\[2mm]
\Delta G^2_r(\cdot,y)+\kappa_2^2 G^2_r(\cdot,y)=0& {\rm in}\;\; \Om_{2,r}\\[2mm]
G^2_r(\cdot,y)-G^1_r(\cdot,y)=\Phi_{\kappa_1}(\cdot,y)&{\rm on}\;\;  \G_r\se\{y\}\\[2mm]
\ds\frac{\pa{G^2_r(\cdot,y)}}{\pa n}-\frac{\pa{G^1_r(\cdot,y)}}{\pa n}
=\frac{\pa{\Phi_{\kappa_1}(\cdot,y)}}{\pa n}&{\rm on}\;\;  \G_r\se\{y\}\\[2mm]
\ds\lim_{|x|\to \infty}\sqrt{|x|}\left(\frac{\pa G^{j}_r(\cdot,y)}{\pa |x|}
-{\rm i}\kappa_j G^j_r(\cdot,y)\right)=0& {\rm for}\;\; j=1,2.
\end{array}\right.
\en

To show the existence of a unique solution of Problem (\ref{x1}), define
$E^1_r(\cdot,y):=G^1_r(\cdot,y)$ and $E^2_r(\cdot,y):=G^2_r(\cdot,y)-\Phi_{\kappa_2}(\cdot,y)$
which satisfy Problem (\ref{x1}) with the boundary data replaced with
\ben\label{x11}
h_1(z):=\Phi_{\kappa_1}(z,y)-\Phi_{\kappa_2}(z,y)\quad{\rm and}\quad
h_2(z):=\frac{\pa\Phi_{\kappa_1}(z,y)}{\pa n(z)}-\frac{\partial\Phi_{\kappa_2}(z,y)}{\partial n(z)}.
 \enn
A direct calculation yields that $h_1(\cdot)\in C^1(\G_r)\cap L_{\rm loc}^2(\G_r)$ and
$h_2(\cdot)\in C(\G_r)\cap L_{\rm loc}^2(\G_r)$ for any $y\in \G_r\se\G$.
By a similar argument as in \cite{SZ99} it can be proved that Problem (\ref{x1}) is well-posed
in the above setting for $h_1$ and $h_2$, which implies that $E_r^1(\cdot,y)\in H^1_{\rm loc}(\Omega_{1,r})$
and $E_r^2(\cdot,y)\in H^1_{\rm loc}(\Omega_{2,r})$.

We now prove that $G_r(x,y)=G_r(y,x)$ for $y\in \G_r\se\G$ and $x\in\G_{b,a}$.
For $s=x,y$, let $B_{\epsilon}(s)$ denote a ball centered at $s$ with sufficiently small radius
$\epsilon>0$ such that $B_{\epsilon}(x)\cap B_{\epsilon}(y)=\emptyset$. Let $B^+_{\epsilon}(y):=B_{\epsilon}(y)\cap\Omega_{1,r}$, $B^-_{\epsilon}(y):=B_{\epsilon}(y)\cap\Omega_{2,r}$, and $B_{r'}:=\{x\in\R^2: |x|=r'>r\}$. Using the Green's theorem in $(\Omega_{1,r}\cap B_{r'})\se \ov{(B_{\epsilon}(x)\cup B^+_{\epsilon}(y))}$ gives
\ben
0&=&\int_{(\Omega_{1,r}\cap B_{r'})\se\ov{(B_{\epsilon}(x)\cup B^+_{\epsilon}(y))}}(\Delta G_r(z,x)G_r(z,y)-\Delta G_r(z,y)G_r(z,x)){\rm d}z\\
&=&\left\{\int_{\pa B^+_{r'}}+\int_{\pa B_{\epsilon}(x)}-\int_{\hat{\G}}+\int_{\pa B_{\epsilon}^+(y)\se \G_r}\right\}\left(\frac{\pa G_r(z,x)}{\pa n(z)}G_r(z,y)-\frac{\pa G_r(z,y)}{\pa n(z)}G_r(z,x)\right){\rm d}s(z)
\enn
where $\hat{\G}:=(B_{r'}\cap\G_r)\se B_{\epsilon}(y)$ and $\pa B_{r'}^{+}:=\{x\in\R^2: |x|=r', x_2>0 \}$. Using the Green's theorem again
in $(\Omega_{2,r}\cap B_{r'})\se B^-_{\epsilon}(y)$ yields
\ben
0&=&\int_{(\Omega_{2,r}\cap B_{r'})\se\ov{ B^-_{\epsilon}(y)}}(\Delta G_r(z,x)G_r(z,y)-\Delta G_r(z,y)G_r(z,x)){\rm d}z\\
&=&\left\{\int_{\pa B^-_{r'}}+\int_{\hat{\G}}+\int_{\pa B_{\epsilon}^-(y)\se \G_r}\right\}\left(\frac{\pa G_r(z,x)}{\pa n(z)}G_r(z,y)-\frac{\pa G_r(z,y)}{\pa n(z)}G_r(z,x)\right){\rm d}s(z)
\enn
where $\pa B_{r'}^{-}:=\{x\in\R^2: |x|=r', x_2<0 \}$. Then, combining the above two equalities leads to
\be\no
0&=&\left\{\int_{\pa B_{r'}}+\int_{\pa B_{\epsilon}(x)}+\int_{\pa B_{\epsilon}(y)}\right\}\left(\frac{\pa G_r(z,x)}{\pa n(z)}G_r(z,y)-\frac{\pa G_r(z,y)}{\pa n(z)}G_r(z,x)\right){\rm d}s(z)\\\label{x111}
&=:& I_1+I_2+I_3.
\en
It follows from the Sommerfeld radiation condition that $I_1\rightarrow 0$ as $r'\rightarrow \infty$
and from the mean value theorem that $I_2\rightarrow G_r(x,y)$ as $\epsilon\rightarrow 0$.
For the term $I_3$, by noting $G_r(z,y)=E(z,y)+\Phi(z,y)$ with $E(z,y)=E_r^{j}(z,y)$ and $\Phi(z,y)=\Phi_{\kappa_j}(z,y)$
for $z\in\Om_{j,r}$ $(j=1,2)$, we can split $I_3$ into two parts, that is, $I_3=I_{31}+I_{32}$, such that
\be\label{x222}
I_{31}:=\int_{\pa B_{\epsilon}(y)}\frac{\pa G_r(z,x)}{\pa n(z)}G_r(z,y){\rm d}s(z)\to 0\qquad{\rm as}\;\;\epsilon\to 0
\en
since $G_r(\cdot,y)\in H^{1,p}(B_{\epsilon}(y))$ for any fixed $p\in (1,2)$.
Further, we have
\be\no
I_{32}:&=&-\int_{\pa B_{\epsilon}(y)}\frac{\pa G_r(z,y)}{\pa n(z)}G_r(z,x){\rm d}s(z)\\\no
&=&-\int_{\pa B_{\epsilon}(y)}\frac{\pa E(z,y)}{\pa n(z)}G_r(z,x){\rm d}s(z)
       -\int_{\pa B_{\epsilon}(y)}\frac{\pa \Phi(z,y)}{\pa n(z)}G_r(z,x){\rm d}s(z)\\\label{x333}
&=&: I_{321}+I_{322}.
\en
By recalling $E_r^j(\cdot,y)\in H^1_{\rm loc}(\Omega_{j,r})$, one has $I_{321}\to 0$ and $I_{322}\to -G_r(y,x)$ as $\epsilon\to 0$, which combines with (\ref{x222}) and (\ref{x333}).
to show $I_3\to -G_r(y,x)$ as  $\epsilon\to 0$. Therefore, we have proved the reciprocity relation $G_r(x,y)=G_r(y,x)$ from equality (\ref{x111}) for all $x\in \G_{b,a}$ and $y\in\partial D$.

We next claim that $H^*$ is injective. To see this, let $H^*\psi=0$ on $\G_{b,a}$ for some $\psi\in H^{\frac{1}{2}}(\pa D)$ and define the function
\ben
\tilde{w}(x): = \int_{\pa D}G_r(x,y)\ov{\psi(y)}{\rm d}s(y)\qquad x\in\R^2.
\enn
Then, $\tilde{w}(x)=0$ on $\G_{b,a}$. The analyticity and the uniqueness of the Dirichlet boundary value problem in $U_b:=\{(x_1,x_2)\in\R^2:x_2>b\}$ implies $\tilde{w}=0$ in $U_b$.
Using the analytic continuation as well as the continuous conditions of $G_r$, $\pa_n G_r$ across $\G_r\cap\G$, we have $\tilde{w}=0$ in $\R^2\se\ov{D}$.
Since $G_r(x,y)$ has the same singularity with $\Phi_{\kappa_1}(x,y)$ at $x=y$ for $y\in\pa D$, we deduce that $\tilde{w}$ solves the homogeneous Dirichlet problem
\ben
\left\{\begin{array}{lllll}
\Delta \tilde{w}+\kappa_1^2\tilde{w}=0 &\quad {\rm in}\;\; D,\\[2mm]
\tilde{w}=0 &\quad {\rm on}\;\; \pa D.
\end{array}\right.
\enn
By the assumption on $\kappa^2_1>0$, we conclude $\tilde{w}=0$ in $D$, whence $\psi=0$
follows from the jump relationship of $\pa_n \tilde{w}$ on $\pa D$. Therefore, $H^*$ is injective
which means that ${\rm Range}(H)$ is dense in $H^{-\frac{1}{2}}(\pa D)$. The proof is thus complete.
\end{proof}
\begin{remark}\label{remark1} {\rm
The assumption in Lemma \ref{lem} could be removed by choosing a suitable $r>0$, which may be seen by
the strong monotonicity property and the continuous dependence of the Dirichlet eigenvalues of $-\Delta$
with respect to the domain \cite{L86}. We refer to Remark 3.2 in \cite{DLLY17} for details.
}
\end{remark}

By Lemma \ref{lem}, it is known that there exists a unique solution $(v(\cdot,z),w(\cdot,z))\in \ov{X}\times L^2(D)$ to Problem (\ref{d6})-(\ref{d7}) for each $z\in D$. With this result, we can establish the following two important properties for $F(I+\eta T)^{-1}$ which will play a key role in proposing our sampling method.

\begin{theorem}\label{thm3.1}
If $\kappa_1>0$ is not a transmission eigenvalue of Problem (\ref{d6})-(\ref{d7}), then $F(I+\eta T)^{-1}: \ov{X}\rightarrow L^2(\G_{b,a})$ is injective with dense range.
\end{theorem}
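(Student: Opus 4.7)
My plan is to prove injectivity and dense range separately, using the Lippmann--Schwinger structure built into $(I+\eta T)^{-1}$ to reduce injectivity to the homogeneous version of the interior transmission problem (\ref{d6})--(\ref{d7}), and to handle density through a reciprocity argument for the modified near-field operator ${\bf N}_{\rm Mod}$.

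For injectivity, suppose $v\in\ov{X}$ satisfies $F(I+\eta T)^{-1}v=0$. Set $\phi:=(I+\eta T)^{-1}v\in L^2(D)$ and introduce the volume potential $W(x):=\int_D G_r(x,z)\phi(z)\,{\rm d}z$. Then $-\eta W|_{\G_{b,a}}=(F\phi)|_{\G_{b,a}}=0$, while $W$ satisfies $\Delta W+\kappa_r^2 W=-\phi\chi_D$ in $\R^2$ together with the Sommerfeld radiation condition. Since $W$ is real-analytic in $\Om_1$ and vanishes on the open subsegment $\G_{b,a}$ of the line $\{x_2=b\}\subset\Om_1$, the real-analytic trace $W|_{\{x_2=b\}}$ must vanish identically; uniqueness of the radiating Dirichlet problem in $\{x_2>b\}$ together with unique continuation in the connected set $\Om_1$ then forces $W\equiv 0$ in $\Om_1$. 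Because $G_r$ enjoys the transmission continuity $[G_r]=[\pa_n G_r]=0$ on $\G_r$, the quantities $W$ and $\pa_n W$ are continuous across $\G_r\se\ov{D}=\{(x_1,0):|x_1|>r\}$, transferring the zero Cauchy data to the $\Om_{2,r}$-side. Holmgren's theorem and unique continuation applied to the radiating solution of $\Delta W+\kappa_2^2 W=0$ in $\Om_{2,r}$ then yield $W\equiv 0$ there as well, and the $H^2_{\rm loc}$-regularity of the volume potential of an $L^2$-density forces $W|_{\pa D}=\pa_n W|_{\pa D}=0$ from the interior.

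Inside $D$ one has $\Delta W+\kappa_1^2 W=-\phi$ and $v=\phi+\eta W$, whence $\Delta W+\kappa_2^2 W=-v$ via $\kappa_1^2-\eta=\kappa_2^2$. Setting $w:=v-\eta W$, a short computation yields $\Delta w+\kappa_2^2 w=0$ in $D$, while $\Delta v+\kappa_1^2 v=0$ holds because $v\in\ov{X}=Y$ by Lemma \ref{lem}. The boundary identities $w-v=-\eta W=0$ and $\pa_n w-\pa_n v=-\eta\pa_n W=0$ on $\pa D$ exhibit $(v,w)\in \ov X\times L^2(D)$ as a solution of the homogeneous interior transmission problem (\ref{d6})--(\ref{d7}) with $v-w=\eta W\in H^2_0(D)$; the non-transmission-eigenvalue hypothesis therefore forces $v=w=0$, which gives injectivity.

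For density in $L^2(\G_{b,a})$, I would pass through the equivalent operator ${\bf N}_{\rm Mod}=F(I+\eta T)^{-1}H$, where $Hg:=v_g|_D$ has range in $X\subset\ov{X}$, so that ${\rm Range}({\bf N}_{\rm Mod})\subset{\rm Range}(F(I+\eta T)^{-1}|_{\ov{X}})$. The reciprocity relations $u(x,y)=u(y,x)$ and $G_r(x,y)=G_r(y,x)$ on $\G_{b,a}\times \G_{b,a}$ make the kernel $u^{\rm s}(x,y)-G_r^{\rm s}(x,y)$ of ${\bf N}_{\rm Mod}$ symmetric, yielding ${\bf N}_{\rm Mod}^*\psi=\overline{{\bf N}_{\rm Mod}\ov\psi}$; hence density of the range of ${\bf N}_{\rm Mod}$ reduces to its own injectivity. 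If ${\bf N}_{\rm Mod}g=0$, then $F(I+\eta T)^{-1}v_g=0$, and the injectivity proved above gives $v_g\equiv 0$ in $D$; unique continuation in the connected set $\R^2\se\G_{b,a}$ propagates this to $v_g\equiv 0$ throughout $\R^2\se\G_{b,a}$, and the single-layer jump relation $[\pa_n v_g]_{\G_{b,a}}=g$ forces $g=0$. The main obstacle I anticipate is propagating $W\equiv 0$ from the finite measurement segment $\G_{b,a}$ through the two-layered medium and across the auxiliary curve $\G_r$ into $\Om_{2,r}$; this is precisely what dictates the combined use of analyticity of the trace on the upper line $\{x_2=b\}$, the transmission continuity of $G_r$ on $\G_r\se\ov{D}$, and Holmgren's theorem on the ray $\{(x_1,0):|x_1|>r\}$.
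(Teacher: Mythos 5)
Your proposal is correct and follows essentially the same route as the paper: injectivity is obtained by showing the volume potential $W=T\phi$ vanishes on $\G_{b,a}$, hence (by analyticity on the line $x_2=b$, uniqueness of the radiating Dirichlet problem in $U_b$, analytic continuation, and the transmission continuity of $G_r$ across $\G_r\se\ov{D}$) in all of $\R^2\se\ov{D}$, so that the resulting pair solves the homogeneous interior transmission problem (\ref{d6})--(\ref{d7}); density is obtained from the reciprocity of the kernel of ${\bf N}_{\rm Mod}$ together with unique continuation and the jump relation on $\G_{b,a}$. Your density step merely repackages the paper's orthogonality computation as the symmetry identity ${\bf N}_{\rm Mod}^*\psi=\overline{{\bf N}_{\rm Mod}\overline{\psi}}$ and then reuses the injectivity already proved, which is the same argument in a slightly tidier form.
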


\begin{proof}
Let $F(I+\eta T)^{-1}v=0$ for some $v\in\ov{X}$ and set $w:=(I+\eta T)^{-1}v$.
Define the function
\ben
\xi(x):=-\eta \int_{D}G_r(x,z)w(z){\rm d}z\qquad{\rm for}\;\; x\in\R^2.
\enn
Then, $\xi=0$ on $\G_{b,a}$ follows from $Fw=0$. By the analyticity of $\xi$ in $\G_b:=\{(x_1,x_2)\in\R^2:x_2=b\}$, we conclude $\xi=0$ on $\G_b$. The uniqueness of the Dirichlet boundary value problem in $U_b:=\{(x_1,x_2)\in\R^2:x_2>b\}$ implies that $\xi=0$ in $U_b$. Then the analytic continuation and the continuity of $G_r$ and $\pa_n G_r$ across $\G_r\cap\G$ lead to $\xi=0$ in $\R^2\setminus \ov{D}$. Thus, we conclude that $(v,w)\in\ov{X}\times L^2(D)$  is a solution of Problem (\ref{d6})-(\ref{d7}) with the homogeneous conditions. Since
$\kappa_1>0$ is not a transmission eigenvalue, we derive $v=0$ which means that $F(I+\eta T)^{-1}$ is injective.

To show the denseness we assume that  there exists $g\in L^2(\G_{b,a})$ such that
\be\label{x444}
(F(I+\eta T)^{-1}v_h, g)=0\qquad {\rm for\;all\;}v_h\in X.
\en
Then by (\ref{d4}), we have
\ben
\int_{\G_{b,a}}\int_{\G_{b,a}}(u(x,y)-G_r(x,y))h(y){\rm d}s(y)\ov{g(x)}{\rm d}s(x)=0\qquad{\rm for\; all}\;h\in L^2(\G_{b,a}),
\enn
whence
\ben
\int_{\G_{b,a}}(u(y,x)-G_r(y,x))\ov{g(x)}{\rm d}s(x)=0\qquad{\rm for\; all}\;y\in \G_{b,a}.
\enn
A similar discussion implies
\ben
\int_{\G_{b,a}}(u(y,x)-G_r(y,x))\ov{g(x)}{\rm d}s(x)=0\qquad{\rm for\; all}\;y\in\R^2\setminus \ov{D}.
\enn
Thus, we conclude that $v_{\ov{g}}\in \ov{X}$ and $(\int_{\G_{b,a}}u(y,x)\ov{g(x)}{\rm d}s(x))|_D$,
are the solutions of the homogeneous problem (\ref{d6})-(\ref{d7}). Hence, $v_{\ov{g}}=0$ in $D$
due to the assumption on $\kappa_1$ which further implies $v_{\ov{g}}=0$ in $\Omega_{1,r}\setminus\G_{b,a}$
from the analytic continuation. Using the jump relations of the normal derivative of $v_{\ov{g}}$
gives that $g=0$. This means that, by (\ref{x444}) ${\rm Range}(F(I+\eta T)^{-1}(\ov{X}))$ is dense
in $L^2(\G_{b,a})$.
\end{proof}

\begin{theorem}\label{thm3.2}
If $\kappa_1>0$ is not a transmission eigenvalue Problem (\ref{d6})-(\ref{d7}), then $z\in D$ if and only
if $G_r(\cdot, z)|_{\Gamma_{b,a}}\in {\rm Range}(F(I+\eta T)^{-1})$.
\end{theorem}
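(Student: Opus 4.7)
The proof naturally splits into a sufficiency part ($z\in D$) and a necessity part ($z\notin D$). For sufficiency, I would construct an explicit pre-image of $G_r(\cdot,z)|_{\G_{b,a}}$ under $F(I+\eta T)^{-1}$ from the interior transmission problem (\ref{d6})--(\ref{d7}). For necessity, I would argue by contradiction, re-running the analytic-continuation chain from the proof of Lemma \ref{lem} and exploiting the fundamental-solution singularity of $G_r(\cdot,z)$ at $z$.

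For $z\in D$: Lemma \ref{lem} together with the non-transmission-eigenvalue assumption yields a unique pair $(v(\cdot,z),w(\cdot,z))\in\ov{X}\times L^2(D)$ solving (\ref{d6})--(\ref{d7}), and I would argue that this $v(\cdot,z)$ is the desired pre-image. To verify, introduce
\ben
\hat{u}(x):=\left\{\begin{array}{ll} w(x,z)-v(x,z),& x\in D,\\[1mm] G_r(x,z),& x\in\R^2\se\ov{D}.\end{array}\right.
\enn
The boundary conditions (\ref{d7}) guarantee that $\hat{u}$ has continuous Cauchy data across $\pa D$, and using the Helmholtz equations (\ref{d6}) a direct computation gives that $\hat{u}$ is a radiating solution of $(\Delta+\kappa_1^2)\hat{u}=\eta w\chi_D$ in $\Om_{1,r}$ and $(\Delta+\kappa_2^2)\hat{u}=0$ in $\Om_{2,r}$, with the $G_r$-type continuity across $\G_r\se\pa D$. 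The volume potential $\phi(x):=-\eta\int_D G_r(x,z')w(z',z){\rm d}z'$ solves the very same transmission problem by the defining property of $G_r$ and the superposition principle, and the well-posedness of (\ref{c1}), extended from point sources to distributed $L^2$-sources by linearity, then forces $\hat{u}\equiv\phi$ on $\R^2$. Restricting to $D$ reads $w-v=-\eta Tw$, i.e., $(I+\eta T)w=v$, so that $w=(I+\eta T)^{-1}v$; evaluating on $\G_{b,a}$ delivers $F(I+\eta T)^{-1}v(\cdot,z)=\phi|_{\G_{b,a}}=G_r(\cdot,z)|_{\G_{b,a}}$.

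For $z\notin D$: Assume for contradiction that $v\in\ov{X}$ and $w:=(I+\eta T)^{-1}v$ produce $\phi:=-\eta Tw$ with $\phi|_{\G_{b,a}}=G_r(\cdot,z)|_{\G_{b,a}}$. The identity $\phi=G_r(\cdot,z)$ is extended from $\G_{b,a}$ to the full line $\G_b$ by analyticity along $\G_b$ (note $z\notin\G_b$), into $U_b$ by uniqueness of the radiating Dirichlet problem on the upper half-plane, and then throughout the connected set $\R^2\se\ov{D}$ by analytic continuation combined with the continuity of $G_r$ and $\pa_n G_r$ across $\G_r\cap\G$, exactly as in the proof of Lemma \ref{lem}. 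The contradiction comes from comparing regularities: $\phi$ is smooth on $\R^2\se\ov{D}$ since its kernel $G_r(\cdot,z')$ is non-singular for $z'\in D$ and $x\notin\ov{D}$, whereas $G_r(\cdot,z)$ carries the Hankel-type logarithmic singularity of $\Phi_{\kappa}(\cdot,z)$ at $z$; since $z\notin D$, a punctured neighborhood of $z$ sits inside $\R^2\se\ov{D}$ (the boundary case $z\in\pa D$ being treated by approaching $z$ from the exterior side), so the identity cannot hold near $z$.

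The main obstacle I expect is the bookkeeping in the sufficiency direction: two distinct sets of transmission conditions must be matched--the interior ones across $\pa D$ coming from (\ref{d7}) and the $G_r$-type ones across $\G_r\se\pa D$ inherited from $G_r$--so that $\hat{u}$ and $\phi$ can be identified as solutions of one common well-posed transmission problem with distributed $L^2$-source. Once that identification is secured, the step producing $w=(I+\eta T)^{-1}v$ is essentially algebraic, and the necessity direction follows on the template already established in Lemma \ref{lem} and Theorem \ref{thm3.1}.
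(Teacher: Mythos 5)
Your proposal is correct and follows essentially the same route as the paper: for $z\in D$ you glue $w-v$ inside $D$ with $G_r(\cdot,z)$ outside (the paper's $\widetilde v_z$), identify this with the volume potential $-\eta Tw$ (the paper does this via the Lippmann--Schwinger equation (\ref{s1}) from \cite{YLZ17}, you via uniqueness of the transmission problem with distributed source, which is the same content), and read off $w=(I+\eta T)^{-1}v$; for $z\notin D$ you run the identical analytic-continuation and singularity-mismatch contradiction. No substantive differences.
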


\begin{proof}
Since $\kappa_1>0$ is not a transmission eigenvalue, there exists a unique solution
$(v(\cdot,z),w(\cdot,z))\in \ov{X}\times L^2(D)$ to Problem (\ref{d6})-(\ref{d7}) if $z\in D$.
Then we define a function $\wi{v}_z$ by
\be\label{s2}
\widetilde{v}_z(\cdot):=\left\{\begin{array}{l}
              G_r(\cdot,z) \qquad\qquad\quad\; \textrm{in}\;\; \Omega_1, \\[1mm]
              w(\cdot,z)-v(\cdot,z) \qquad \textrm{in}\;\; D,\\[1mm]
              G_r(\cdot,z) \qquad\qquad\quad\; \textrm{in}\;\; \Omega_{2,r}.
            \end{array}
\right.
\en
It is easily verified that $\widetilde{v}_z$ solves Problem (\ref{c2}) with
$\beta(\cdot):=\eta v(\cdot,z)$.
It follows from \cite{YLZ17} that $\widetilde{v}_z(\cdot)$ satisfies the Lippmann-Schwinger equation
\be\label{s1}
\widetilde{v}_z(x)+\eta\int_D G_r(x,y)\widetilde{v}_z(y){\rm d}y=-\eta\int_DG_r(x,y)v(y,z){\rm d}y
\en
for $x\in\R^2$, which may be reformulated in the form in $D$:
\ben
\widetilde{v}_z(\cdot) = -\eta T(v(\cdot,z)+\widetilde{v}_z(\cdot))\quad{\rm or}\quad
(\widetilde{v}_z(\cdot)+v(\cdot,z)) + \eta T(\widetilde{v}_z(\cdot)+v(\cdot,z)) = v(\cdot,z).
\enn
Then we have $\widetilde{v}_z(\cdot)+v(\cdot,z)=(I+\eta T)^{-1}v(\cdot,z)$, whence
$F(I+\eta T)^{-1}v(\cdot,z)=F(\wi{v}_z(\cdot)+v(\cdot,z))=\widetilde{v}_z(\cdot)|_{\Gamma_{b,a}}=G_r(\cdot,z)|_{\Gamma_{b,a}}$ follows from (\ref{s1}) and (\ref{s2}). Hence, it holds
$G_r(\cdot, z)|_{\Gamma_{b,a}}\in {\rm Range}(F(I+\eta T)^{-1})$ if $z\in D$.

Conversely, assume on the contrary $z\notin D$. Since $G_r(x,z)|_{\G_{b,a}}\in{\rm Range}(F(I+\eta T)^{-1})$,
there exists some $v\in\ov{X}$ such that $F(I+\eta T)^{-1}v=G_r(x,z)|_{\G_{b,a}}$. Set $w:=(I+\eta T)^{-1}v$
and define
\ben
\mu(x): = -\eta\int_{D}G_r(x,z)w(z){\rm d}z\qquad x\in\R^2.
\enn
It is checked that $\mu(x)$ is the solution to Problem (\ref{c2}) with the data $\eta v$. Hence,
we have $F(I+\eta T)^{-1}v=\mu|_{\Gamma_{b,a}}$ and then $\mu(x)=G_r(x,z)$ on $\Gamma_{b,a}$.
Using the analyticity and the uniqueness of the Dirichlet boundary value problem in $U_b$ again,
we conclude that $\mu(\cdot)=G_r(\cdot,z)$ on $U_b$.
We further have $\mu(x)=G_r(x,z)$ for $x\in \R^2\backslash\{D\cup\{z\}\}$ from the analytic continuation.
Note that $G_r(x,z)$ has a singularity at $x=z$ while $\mu(x)$ is smooth at $x=z$, which leads to a
contradiction. The proof is thus completed.
\end{proof}

With the aid of Theorems \ref{thm3.1} and \ref{thm3.2}, we are now able to formulate the main result
of this paper in the following theorem.

\begin{theorem}\label{thm3.3}
Assume that $\kappa_1>0$ is not a transmission eigenvalue of Problem (\ref{d6})-(\ref{d7}). Then we have

1) if $z\in D$, then for each $\epsilon>0$ there exists a solution $g_{z,\epsilon}\in L^2(\G_{b,a})$ of
the inequality
\be\label{d8}
\|{\bf N}_{\rm Mod}g_{z,\epsilon}(\cdot)-G_r(\cdot,z)\|_{L^2(\G_{b,a})}<\epsilon
\en
such that
\ben
\lim_{z\rightarrow \pa D\cap\G}\|g_{z,\epsilon}\|_{L^2(\G_{b,a})}=\infty \quad{\rm and}\quad
\lim_{z\rightarrow \pa D\cap\G}\|v_{g_{z,\epsilon}}\|_{L^2(D)}=\infty
\enn
where $v_{g_{z,\epsilon}}$ is defined by (\ref{d3}) with the density $g_{z,\epsilon}$;

2) if $z\in \R^2\setminus\ov{D}$, then for each $\epsilon>0$ and $\delta>0$ there exists a solution $g_{z,\epsilon}^{\delta}\in L^2(\G_{b,a})$ of the inequality
\be\label{d9}
\|{\bf N}_{\rm Mod}g_{z,\epsilon}^{\delta}(\cdot)-G_r(\cdot,z)\|_{L^2(\G_{b,a})}<\epsilon+\delta
\en
such that
\ben
\lim_{\delta\rightarrow 0}\|g^{\delta}_{z,\epsilon}\|_{L^2(\G_{b,a})}=\infty \quad{\rm and}\quad
\lim_{\delta\rightarrow 0}\|v_{g^{\delta}_{z,\epsilon}}\|_{L^2(D)}=\infty
\enn
where $v_{g^{\delta}_{z,\epsilon}}$ is defined by (\ref{d3}) with the density $g^{\delta}_{z,\epsilon}$.
\end{theorem}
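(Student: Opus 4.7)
The plan is to exploit the factorization $({\bf N}_{\rm Mod}g)(x)=F(I+\eta T)^{-1}v_g(x)$ from (\ref{d4}), so that the first-kind equation ${\bf N}_{\rm Mod}g\approx G_r(\cdot,z)|_{\G_{b,a}}$ is reduced to the range question already answered by Theorem \ref{thm3.2}, combined with the density statement of Theorem \ref{thm3.1} and the closure relation $\ov{X}=Y$ from Lemma \ref{lem}. Throughout I will use that $F(I+\eta T)^{-1}:L^2(D)\to L^2(\G_{b,a})$ and the single-layer operator $H:L^2(\G_{b,a})\to L^2(D)$, $(Hg)(x)=v_g(x)$, are bounded linear maps.

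For part 1), fix $z\in D$. Theorem \ref{thm3.2} yields some $v_z\in\ov{X}$ with $F(I+\eta T)^{-1}v_z=G_r(\cdot,z)|_{\G_{b,a}}$; moreover, the construction in its proof identifies $v_z$ with the first component $v(\cdot,z)$ of the interior transmission problem (\ref{d6})--(\ref{d7}). Since $v_z\in\ov{X}$ and $X=\{v_g|_D:g\in L^2(\G_{b,a})\}$ is dense in $\ov{X}$, for any $\epsilon>0$ I can pick $g_{z,\epsilon}\in L^2(\G_{b,a})$ with $\|v_{g_{z,\epsilon}}-v_z\|_{L^2(D)}$ so small that the boundedness of $F(I+\eta T)^{-1}$ yields (\ref{d8}). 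To obtain the blow-up as $z\to z_0\in\pa D\cap\G$, observe that $w(\cdot,z)-v(\cdot,z)=G_r(\cdot,z)$ on $\pa D$ carries the logarithmic singularity of $G_r$ at the source point $z$, so $\|G_r(\cdot,z)\|_{H^{1/2}(\pa D)}\to\infty$. Well-posedness of (\ref{d6})--(\ref{d7}) in $\ov{X}\times L^2(D)$ then forces $\|v_z\|_{L^2(D)}\to\infty$: otherwise a bounded sequence would, along a subsequence, converge weakly to a solution with zero data, contradicting injectivity. Choosing $g_{z,\epsilon}$ so that $v_{g_{z,\epsilon}}$ tracks $v_z$ within, say, half of $\|v_z\|_{L^2(D)}$ propagates the blow-up to $\|v_{g_{z,\epsilon}}\|_{L^2(D)}$, and the boundedness of $H$ from below is not needed: since $v_{g_{z,\epsilon}}=Hg_{z,\epsilon}$ and $H$ is bounded, divergence of $\|Hg_{z,\epsilon}\|$ forces divergence of $\|g_{z,\epsilon}\|_{L^2(\G_{b,a})}$.

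For part 2), fix $z\in\R^2\se\ov{D}$. Theorem \ref{thm3.2} gives $G_r(\cdot,z)|_{\G_{b,a}}\notin\mathrm{Range}(F(I+\eta T)^{-1})$, while Theorem \ref{thm3.1} asserts density of this range in $L^2(\G_{b,a})$. Thus for each $\delta>0$ I can find $v_\delta\in\ov{X}$ with $\|F(I+\eta T)^{-1}v_\delta-G_r(\cdot,z)\|_{L^2(\G_{b,a})}<\delta$, and the usual non-range argument shows $\|v_\delta\|_{L^2(D)}\to\infty$ as $\delta\to 0$: were it bounded, a weak limit point $v^*\in\ov{X}$ would (using that $F(I+\eta T)^{-1}$ maps weakly convergent sequences to weakly, hence $L^2$-weakly, convergent ones on the target side) satisfy $F(I+\eta T)^{-1}v^*=G_r(\cdot,z)|_{\G_{b,a}}$, contradicting $z\notin D$. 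Approximating $v_\delta$ in $L^2(D)$ by $v_{g^\delta_{z,\epsilon}}$ with $g^\delta_{z,\epsilon}\in L^2(\G_{b,a})$ within an error controlled by $\epsilon/\|F(I+\eta T)^{-1}\|$ yields (\ref{d9}), and boundedness of $H$ again transfers blow-up of $\|v_{g^\delta_{z,\epsilon}}\|_{L^2(D)}$ to blow-up of $\|g^\delta_{z,\epsilon}\|_{L^2(\G_{b,a})}$.

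The main obstacle I expect is the quantitative blow-up of the interior transmission problem solution as $z\to\pa D\cap\G$ in part 1); the rest is the classical Colton--Kirsch template. The difficulty is that $L^2$-solutions of (\ref{d6})--(\ref{d7}) have only very weak traces on $\pa D$, so one cannot directly bound $\|v_z\|_{L^2(D)}$ from below by $\|G_r(\cdot,z)\|_{H^{1/2}(\pa D)}$. I would handle this by the standard compactness/uniqueness argument outlined above (assume boundedness, extract a weakly convergent subsequence, use the compactness of the trace embedding into $H^{-1/2}(\pa D)$, and derive a contradiction with the explicit singular behavior of $G_r(\cdot,z)$ at $z\to z_0$). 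Once this is secured, the approximation and continuity arguments of Theorems \ref{thm3.1}--\ref{thm3.2} close the proof.
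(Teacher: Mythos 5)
Your overall architecture coincides with the paper's: both reduce the near-field equation via the factorization ${\bf N}_{\rm Mod}g=F(I+\eta T)^{-1}v_g$ from (\ref{d4}) and then invoke Theorems \ref{thm3.1}--\ref{thm3.2} together with the density of $X$ in $\ov{X}$. Your part 2) replaces the paper's Tikhonov/Picard argument by a weak-compactness argument (a bounded family $v_\delta$ would have a weak limit $v^*\in\ov{X}$ with $F(I+\eta T)^{-1}v^*=G_r(\cdot,z)|_{\G_{b,a}}$), which is valid and arguably more elementary. The genuine problem is in part 1), precisely at the step you yourself flag as the main obstacle. You correctly observe that $L^2$ solutions of (\ref{d6})--(\ref{d7}) have no usable traces, so $\|v(\cdot,z)\|_{L^2(D)}$ cannot be bounded below by $\|G_r(\cdot,z)\|_{H^{1/2}(\pa D)}$ directly; but the remedy you offer --- extract a weakly convergent subsequence and ``use the compactness of the trace embedding into $H^{-1/2}(\pa D)$'' --- does not close the gap, because there is no trace operator on $L^2(D)$ to which such an embedding could be applied, and your intermediate claim that the weak limit would be ``a solution with zero data'' is incorrect: the datum $G_r(\cdot,z)$ blows up as $z\to\pa D\cap\G$, it does not tend to zero, so injectivity of the interior transmission problem gives no contradiction.

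The missing ingredient, which is exactly what the paper's proof supplies, is the smoothing property of the volume potential combined with the algebraic relation between $v$ and $w$: since $w(\cdot,z)=(I+\eta T)^{-1}v(\cdot,z)$, one has $v(\cdot,z)-w(\cdot,z)=\eta Tw(\cdot,z)$, and $T$ is bounded from $L^2(D)$ into $H^1(D)$. Hence a uniform bound on $\|g_{z,\epsilon}\|_{L^2(\G_{b,a})}$ yields a uniform $L^2(D)$ bound on $v_{g_{z,\epsilon}}$, hence on $v(\cdot,z)$, hence on $w(\cdot,z)$ via $(I+\eta T)^{-1}$, then a uniform $H^1(D)$ bound on $v-w=\eta Tw$, and finally, by the trace theorem, a uniform $H^{1/2}(\pa D)$ bound on $(v-w)|_{\pa D}=-G_r(\cdot,z)|_{\pa D}$ --- contradicting the divergence of $\|G_r(\cdot,z)\|_{H^{1/2}(\pa D)}$ as the singular point $z$ approaches $\pa D\cap\G$. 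Without identifying this chain (in particular the mapping property $T:L^2(D)\to H^1(D)$ and the relation $v-w=\eta Tw$), your compactness sketch does not produce a contradiction, so the blow-up assertions in part 1) remain unproved.
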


\begin{proof}
If $z\in D$, let $(v(\cdot, z),w(\cdot,z))\in\ov{X}\times L^2(D)$ denote the unique solution of
Problem (\ref{d6})-(\ref{d7}). By Theorem \ref{thm3.2}, we have
$F(I+\eta T)^{-1}v(\cdot, z)=G_r(\cdot,z)|_{\G_{b,a}}$. Moreover, for each $\epsilon>0$ there exists
$g_{z,\epsilon}\in L^2(\G_{b,a})$ such that $\|v_{g_{z,\epsilon}}(\cdot)-v(\cdot,z)\|_{L^2(D)}<\epsilon$.
This, combined with the equality $F(I+\eta T)^{-1}={\bf N}_{\rm Mod}$, implies that
\ben
\|{\bf N}_{\rm Mod}g_{z,\epsilon}(\cdot)-G_r(\cdot,z)\|_{L^2(\G_{b,a})}<C_0\epsilon
\enn
for a fixed constant $C_0>0$.

Assuming that $\|g_{z,\epsilon}\|_{L^2(\G_{b,a})}\leq C$ uniformly as $z\rightarrow \pa D\cap\G$,
we have by (\ref{d3}) that $||v_{g_{z,\epsilon}}||_{L^2(D)}\leq C$ as $z\rightarrow \pa D\cap\G$.
Since $\|v_{g_{z,\epsilon}}(\cdot)-v(\cdot, z)\|_{L^2(D)}<\epsilon$, we conclude that
$\|v(\cdot,z)\|_{L^2(D)}\leq C+C_0\epsilon$. Thus it follows from the equality
$w(\cdot, z)=(I+\eta T)^{-1}v(\cdot, z)$ that $\|w(\cdot,z)\|_{L^2(D)}\leq C_2$ for a fixed constant $C_2>0$
as $z\rightarrow\pa D\cap\G$. Using the boundedness of the operator $T:L^2(D)\to H^1(D)$ as well as
the trace theorem gives that $\|v(\cdot,z)-w(\cdot,z)\|_{H^{1/2}(\pa D)}\leq C_3$ for a fixed constant $C_3>0$.
However, this is a contradiction since, by the boundary condition $v(\cdot,z)-w(\cdot,z)=G_r(\cdot,z)$ on $\pa D$
we have $\|G_r(\cdot,z)\|_{H^{1/2}(\pa D)}\rightarrow\infty$ as $z\rightarrow\pa D\cap\G$.

If $z\in\R^2\setminus\ov{D}$, it is known by theorem \ref{thm3.2} that $G_r(\cdot,z)|_{\G_{b,a}}\notin {\rm Range}(F(I+\eta T)^{-1})$. Moreover, it follows from theorem \ref{thm3.1} that $F(I+\eta T)^{-1}$ is injective with dense range in $L^2(\G_{b,a})$. So, for the operator equation $F(I+\eta T)^{-1}v(\cdot)=G_r(\cdot,z)|_{\G_{b,a}}$, there always exists a regularized solution $v_z^{\alpha}\in\ov{X}$ to its regularized equation
\ben
\alpha v_z^{\alpha}+A^*Av_z^{\alpha}=A^*(G_r(\cdot,z)|_{\G_{b,a}})
\enn
which can be represented as
\ben
v_z^{\alpha}=\sum_{n=1}^{\infty}\frac{\mu_n}{\alpha+\mu_n^2}(G_r(\cdot,z)|_{\G_{b,a}}, g_n)\varphi_n
\enn
with $(\mu_n, \varphi_n, g_n)$ a singular system for $A$,
where $A:=F(I+\eta T)^{-1}$ and $\alpha>0$ is a regularization parameter. By theorem 2.13 of  \cite{CC06} it is also known that $v_z^{\alpha}$ is the minimizer of the Tikhonov functional.
Therefore, for every $\delta>0$, we can deduce by choosing $\alpha>0$ that
\be\label{d10}
\|F(I+\eta T)^{-1}v_z^{\alpha}-G_r(\cdot,z)\|_{L^2(\G_{b,a})}<\delta.
\en
Since $G_r(\cdot, z)|_{\G_{b,a}}\notin {\rm Range}(F(I+\eta T)^{-1})$, using the Picard's theorem implies that $\|v_z^{\alpha}\|_{L^2(D)}\rightarrow \infty$ as $\alpha\rightarrow 0$. By noticing  $v_z^{\alpha}\in\ov{X}$, for sufficiently small $\epsilon>0$ there then exists $g_{z,\epsilon}^{\delta}\in L^2(\G_{b,a})$ such that
\be\label{d11}
\|v_{g_{z.\epsilon}^{\delta}}-v_z^{\alpha}\|_{L^2(D)}<\epsilon
\en
which means
\be\label{d12}
\|F(I+\eta T)^{-1}v_{g_{z.\epsilon}^{\delta}}-F(I+\eta T)^{-1}v_z^{\alpha}\|_{L^2(\G_{b,a})}<\epsilon
\en
in the sense of omitting a constant. We now combine (\ref{d10}) and (\ref{d12}) to obtain
\ben
&&\|{\bf N}_{\rm Mod}g_{z,\epsilon}^{\delta}(\cdot)-G_r(\cdot,z)\|_{L^2(\G_{b,a})}\\
&=&\|F(I+\eta T)^{-1}v_{g_{z.\epsilon}^{\delta}}(\cdot)-G_r(\cdot,z)\|_{L^2(\G_{b,a})}\\
&\leq& \|F(I+\eta T)^{-1}v_{g_{z.\epsilon}^{\delta}}-F(I+\eta T)^{-1}v_z^{\alpha}\|_{L^2(\G_{b,a})}
+\|F(I+\eta T)^{-1}v_z^{\alpha}-G_r(\cdot,z)\|_{L^2(\G_{b,a})}\\
&<&\epsilon+\delta.
\enn
Finally, the inequality (\ref{d11}) and the fact $||v_z^{\alpha}||_{L^2(D)}\rightarrow \infty$ as $\alpha\rightarrow 0$ gives
\ben
\lim_{\alpha\rightarrow 0}\|v_{g^{\delta}_{z,\epsilon}}\|_{L^2(D)}=\infty
\enn
whence
\ben
\lim_{\alpha\rightarrow 0}\|g^{\delta}_{z,\epsilon}\|_{L^2(\G_{b,a})}=\infty.
\enn
This ends the proof due to $\alpha\rightarrow 0$ as $\delta\rightarrow 0$.
\end{proof}

By Theorem \ref{thm3.3}, it is known that the solution of the modified near-field equation (\ref{d2}) in the sense of inequalities (\ref{d8}) and (\ref{d9}) has totally different behaviors when the sampling point $z$ lies inside or outside of $D$, which gives a qualitative way to visualize the support $D$. Based on this observation, we define the indicator function
\ben
{\rm Ind}(z):=1/\|g_z\|_{L^2(\G_{b,a})}
\enn
by the solution of (\ref{d8}) and (\ref{d9}). It is easily seen that the indicator function ${\rm Ind}(z)$ is small when the sampling point $z$ approaches the local perturbation of the interface $\G$ from inside of $D$, which can provide a fast imaging algorithm. The following procedure shows how to recover the interface $\Gamma$ by the indicator function ${\rm Ind}(z)$.
\begin{algorithm}\caption{Reconstruction of locally rough interfaces by linear sampling method}\label{alg1}
\begin{itemize}
\item Select a rectangular grid $S$ containing the local perturbation of the scattering interface $\G$.
\item Choose $r>0$ to be large enough such that $\Omega_1\subset\Omega_{1,r}$, that is, the local perturbation
of the interface $\G$ lies totally in $\Omega_{1,r}$. Then solve the  scattering problem (\ref{c1})
and (\ref{a1})-(\ref{a2}) for each $y\in\G_{b,a}$ to obtain the scattered field data $G_r^s(x,y)$
and $u^s(x,y)$ by the Nystr\"{o}m method (cf.\cite{LYZ13}).
\item For each sampling point $z\in S$, solve the modified near-field equation (\ref{d2}) by the
regularization method to obtain the solution $g_z$ and compute the value of the indicator function ${\rm Ind}(z)$.
\item Choose a cut-off value $C>0$ and so that it is numerically reliable that $z\in D$ if and
only if ${\rm Ind}(z)\leq C$.
\end{itemize}
\end{algorithm}

\section{Numerical experiments}\label{sec4}
\setcounter{equation}{0}

In this section, we carry out several numerical examples to illustrate the effectiveness of the modified LSM proposed in Theorem \ref{thm3.3}. It is known by the analyticity of the kernel
$u^{\rm s}(\cdot,y)-G^{\rm s}(\cdot,y)$ that the operator ${\bf N}_{\rm Mod}$ is compact on $L^2(\G_{b,a})$. Hence, equation (\ref{d2}) is severely ill-posed and is solved by a regularization method with the regularization parameter $\alpha(z)$ chosen by the Morozov's discrepancy principle. However, a number of numerical examples we carried out show that the regularization parameter $\alpha(z)$ can be taken as a fixed parameter. In this paper, we choose
$\alpha(z)=5\times 10^{-5}$. In addition, we take the wavenumber $\kappa(\cdot)$ to be
$\kappa_1=1$ and $\kappa_2=10$, where the wavelength is given as $\lambda=2\pi/\kappa_1=2\pi$.

In numerical experiments, the synthetic scattering data $u^{\rm s}(x,y)$ and $G_r^{\rm s}(x,y)$ are obtained by solving the scattering problem (\ref{a1})-(\ref{a2}) and (\ref{c1}) by the Nystr\"{o}m method \cite{LYZ13}. Thus, we can discretize the modified near-field operator ${\bf N}_{\rm Mod}$ into the following finite dimensional matrix
\ben
({\bf N}_{\rm Mod})_{N\times N}=(u^{\rm s}(x_p, y_q)-G_r^{\rm s}(x_p, y_q))_{1\leq p, q\leq N}
\enn
where $x_p$ is the measuring points equally distributed at $\G_{b,a}$ with $p=1,2,...,N$, and $y_q$ is the incident point sources which is also equally distributed at $\G_{b,a}$ with $q=1,2,...,N$.
Then we discrete the test function $G_r(x,z)$ as $G_r(x_p,z)$ with $1\leq p\leq N$ with the sampling points $z$ belonging to a rectangular grid $S$ containing the local perturbation of $\G$.
Therefore, equation (\ref{d2}) can discretized into the following regularized equation
\ben
U_{\rm Mod}(\alpha I+S_{\rm Mod})V_{\rm Mod}^*\hat{g}_z=({\bf N}_{\rm Mod})^*_{N\times N}(G_r(\cdot,z)|_{\G_{b,a}})
\enn
where $U_{\rm Mod}$, $S_{\rm Mod}$ and $V_{\rm Mod}$ are the related matrices in the SVD algorithm satisfying $U_{\rm Mod}S_{\rm Mod}V_{\rm Mod}^*=({\bf N}_{\rm Mod})_{N\times N}^*({\bf N}_{\rm Mod})_{N\times N}$. Then we can define the following discretized form
\ben
{\rm Ind}_N(z) = 1{\left/\left[\sum_{j=1}^{N}|\hat{g}_{z,j}|^2\right]^{\frac{1}{2}}\right.}
\enn
of the indicator function ${\rm Ind}(z)$, where $\hat{g}_z: =(\hat{g}_{z,1},\cdots,\hat{g}_{z,N})^T\in \C^N$.

If ${\rm Ind}_N(z)$ approximates ${\rm Ind}(z)$, then ${\rm Ind}_N(z)$ should be large in $D$ and very small in $\Omega_1$. To present the results under the same standard, ${\rm Ind}_N(z)$ is normalized to a new indicator function
\ben
{\rm NInd(z)}:={\rm Ind}_N(z)/\max_{z\in S}{\rm Ind}_N(z)
\enn
which will be used to reconstruct the local perturbation of the scattering interface $\G$.

To examine the stability of the sampling method, we also exam our method with noisy data. To this end,
let $(\zeta)_{N\times N}=(\zeta_1)_{N\times N}+{\rm i}(\zeta_2)_{N\times N}$ stand for a complex-valued
matrix with its real part $\zeta_1$ and imaginary part $\zeta_2$ consisting of random numbers which obey
the normal distribution $N(0,1)$. Thus, the scattering data with noisy is given as
\ben
(({\bf N}_{\rm Mod})_{N\times N})_{\delta}:=({\bf N}_{\rm Mod})_{N\times N}
+\delta\frac{\zeta}{\|\zeta\|_2}\|({\bf N}_{\rm Mod})_{N\times N}\|_2
\enn
for relative error $\delta>0$.

As shown in Theorem  \ref{thm3.3}, we should remark that the modified LSM always works for every sufficiently
large $r>M$ as long as $\kappa_1>0$ is not a transmission eigenvalue of  Problem (\ref{d6})-(\ref{d7})
in $D=\Omega_2\cap \Omega_{1,r}$. Moreover, it is also noticed that  the computation of $G_r^{\rm s}(\cdot,z)$
may be time-consuming for sufficiently large $r$, which seems to infect the effectiveness of the modified LSM.
However, a variety of numerical experiments we have carried out imply that $G_r^{\rm s}(\cdot,z)$ decays to $0$
in any fixed bounded domain as $r\rightarrow \infty$. We illustrate this fact in the following table for $r=10^t$
with $t=2,4,6,8,10$, where the location of the incident point source is given at $z=(0,1)$, and the receivers
are given at $x_1=(-2,1)$, $x_2=(0,1)$ and $x_3=(2,1)$.

\begin{table}[tbhp]
\caption{\label{us_num} Numerical solutions of $G_r^{\rm s}(x,z)$
as the radius $r\to\infty$}
\centering
\begin{tabular}{ l     l     l}
\hline
 &$r$        &\qquad\;\;$G_r^{\rm s}(x_1,z)$  \qquad\qquad\qquad\;\;\;\; $G_r^s(x_2,z)$ \qquad\qquad\qquad\;\;\;\;\;\;$G^s(x_3,z)$ \\\hline
 &$10^2$  &\quad\;\;-0.0223-0.0045{\rm i}\qquad\qquad\quad -0.0241-0.0051{\rm i}\qquad\qquad\quad -0.0223-0.0045{\rm i}       \\
 &$10^4$     &1.0e-03$\cdot$(0.8017+2.8967{\rm i})   \quad1.0e-03$\cdot$(0.8008+2.8992{\rm i}) \quad\;1.0e-03$\cdot$(0.8017+2.8967{\rm i})   \\
 &$10^6$     &1.0e-03$\cdot$(-0.0312+0.1383{\rm i})\quad1.0e-03$\cdot$(-0.0312+0.1383{\rm i})\quad\
 1.0e-03$\cdot$(-0.0312+0.1383{\rm i})                  \\
 &$10^8$     &1.0e-05$\cdot$(-1.6947-0.8085{\rm i})  \quad1.0e-05$\cdot$(-1.6947-0.8085{\rm i})\quad\;\;
 1.0e-05$\cdot$(-1.6947-0.8085{\rm i})                \\
 &$10^{10}$ &1.0e-06$\cdot$(0.8272+1.7961{\rm i}) \quad1.0e-06$\cdot$(0.8272+1.7961{\rm i})\quad\;
 1.0e-06$\cdot$(0.8272+1.7961{\rm i})         \\
\hline
\end{tabular}
\end{table}

From Table \ref{us_num}, it could be deduced that for any given positive number $m>0$ we always can choose
sufficiently large $r=r(m)$ such that the estimate $\|{\bf N}-{\bf N}_{\rm Mod}\|_{L^2(\G_{b,a})}<10^{-m}$.
For example, $m>0$ is chosen to be $100$. In this case,
equation (\ref{d2}) could be reduced to solve its approximate equation
\be\label{e2}
({\bf N}g)(x)=\Phi_{\kappa_1}(x,z)\qquad{\rm for}\;\; x\in\G_{b,a}.
\en
Similarly, we have the following discrete regularized equation for (\ref{e2}):
\be\label{e4}
U(\alpha I+S)V^*\hat{g}_z=({\bf N})^*_{N\times N}(\Phi_{\kappa_1}(\cdot,z)|_{\G_{b,a}})
\en
where $U$, $S$ and $V$ are the related matrices in the SVD algorithm satisfying
$USV^*={\bf N}_{N\times N}^*({\bf N})_{N\times N}$.
Then Algorithm \ref{alg1} can be reformulated as follows.
\begin{algorithm}\caption{Reconstruction of locally rough interfaces by linear sampling method}\label{alg2}
\begin{itemize}
\item Select a rectangular grid $S$ containing the local perturbation of the scattering interface $\G$.
\item Solve the scattering problem (\ref{a1})-(\ref{a2}) for each $y\in\G_{b,a}$ by the Nystr\"{o}m
method (cf.\cite{LYZ13}).
\item Solve the discretization regularized equation (\ref{e4}) with the regularization parameter
$\alpha(z)=5\times 10^{-5}$ to obtain its solution $g_z^{\alpha}$.
\item Compute the normalized indicator function
\ben\label{e3}
{\rm NInd}(z):={\rm Ind}_N(z)/\max\limits_{z\in S} {\rm Ind}_N(z)
\enn
and then plot the mapping ${\rm NInd}(z)$ against $z$.
\end{itemize}
\end{algorithm}
\newline
\newline
\newline

\textbf{Example 1}. We first consider the interface $\G$ described by the function
\ben
f_1(x_1)=0.6\cdot\Omega_3(x_1)\qquad x_1\in\R^1,
\enn
where $\Omega_3(\cdot)$ denotes cubic B-spline function given by
\ben\no
\Omega_3(x_1)=\left\{\begin{array}{cc}
\ds\qquad\quad\;\;\;\;|x_1|^3/2-x_1^2+2/3& |x_1|\leq1,\\
\ds-|x_1|^3/6+x_1^2-2|x_1|+4/3 &\qquad 1<|x_1|<2,\\
\qquad\qquad\qquad\qquad\qquad\; 0 &|x_1|\geq2.
\end{array}\right.
\enn
which is twice continuously differentiable and has compactly support.

In this example, the local displacement lies totally above the plane $\G_0$. According to the location of the local displacement, we put the sampling points $z$ in the rectangular grid $(-10,10)\times (-1, 0.5)$ with step size $0.5$ in $x$-axis and $0.1$ in $y$-axis. The measurement was taken at $\G_{b,a}$ with $a=15$ and $b=0.55$, and measurement points are chosen as $N=601$ which are uniformly distributed over $\G_{b,a}$. As shown in Figure \ref{f2}, our inversion algorithm can present a satisfactory reconstruction in this case with exact data, $2\%$ noise and $5\%$ noise.

\begin{figure}
\centering
\subfigure[0\%]{\includegraphics[width=0.31\textwidth]{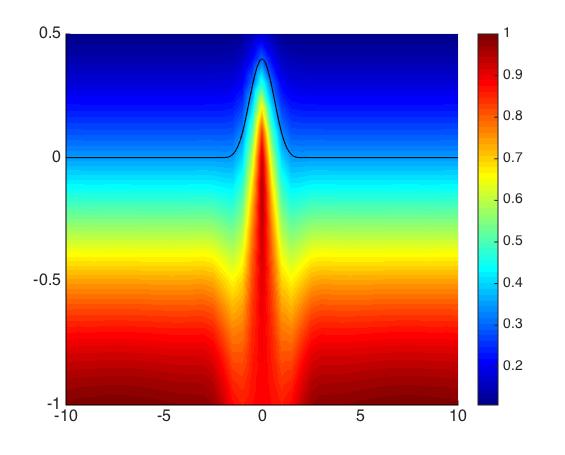}}
\subfigure[2\%]{\includegraphics[width=0.31\textwidth]{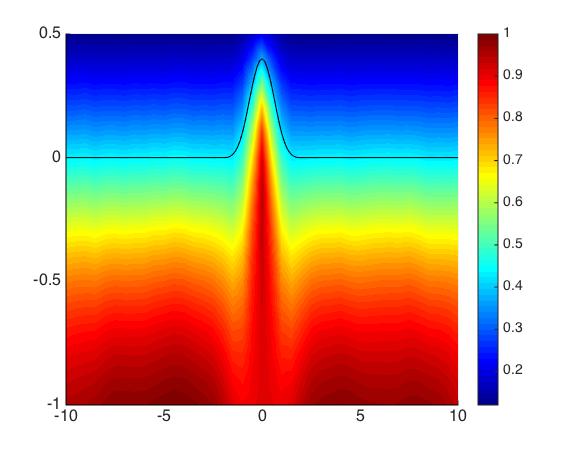}}
\subfigure[5\%]{\includegraphics[width=0.31\textwidth]{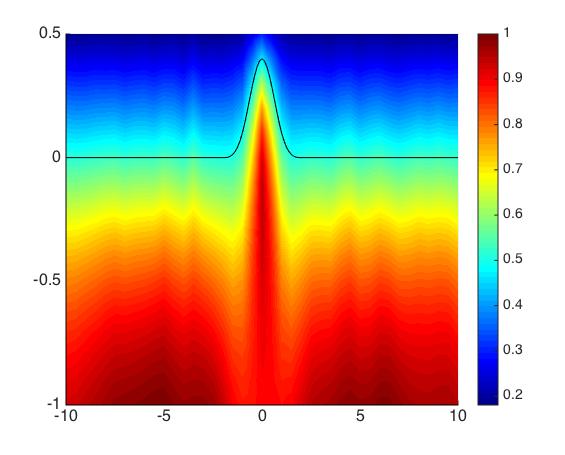}}
\caption{Reconstructions of the locally rough interface given in Example 1 from data with no noise (a), 2\% noise (b) and 5\% noise (c)}
\label{f2} %% label for entire figure
\end{figure}

\textbf{Example 2}. In this example, the scattering interface $\G$ is described as
\ben
f_2(x_1)=\left[-0.3\exp(-x_1^2)-0.4\exp(-4(x_1-2)^2)-0.2\exp(-3(x_1+2)^2)\right]\cdot\chi(x_1),
\enn
where $\chi(x_1)\in C_0^{\infty}(\R^1)$ is defined by
 \begin{equation}\no
  \chi(x_1)=\left\{\begin{array}{cc}\qquad\qquad\qquad\qquad\;\; 1 &|x_1|\leq 4,\\\left[1+\exp\left(\frac{1}{5-|x_1|}+\frac{1}{4-|x_1|}\right)\right]^{-1} &\qquad
  4<|x_1|<5,\\\qquad\qquad\qquad\qquad\;\; 0 &|x_1|\geq 5.\end{array}\right.
\end{equation}
It is seen that the local perturbation of $f_2$ lies totally below the plane $\G_0$. In this example, we set the rectangular grid for $(-10, 10)\times (-1, 0.2)$ with step size $0.5$ in $x$-axis and $0.1$ in $y$-axis. The scattering data are measured at $N=601$ points which are uniformly distributed over $\G_{b,a}$ with $b=0.25$ and $a=15$. The reconstructions with no noise, $2\%$ noise and  $5\%$ noise were presented in Figure \ref{f3} which shows that our inversion algorithm can also provide a satisfactory reconstruction in this case.
\begin{figure}
\centering
\subfigure[0\%]{\includegraphics[width=0.31\textwidth]{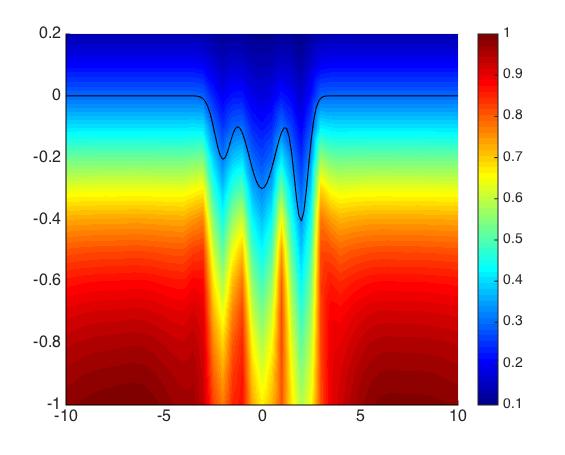}}
\subfigure[2\%]{\includegraphics[width=0.31\textwidth]{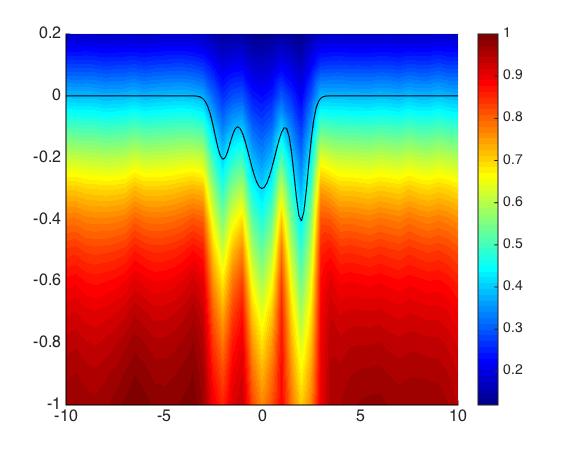}}
\subfigure[5\%]{\includegraphics[width=0.31\textwidth]{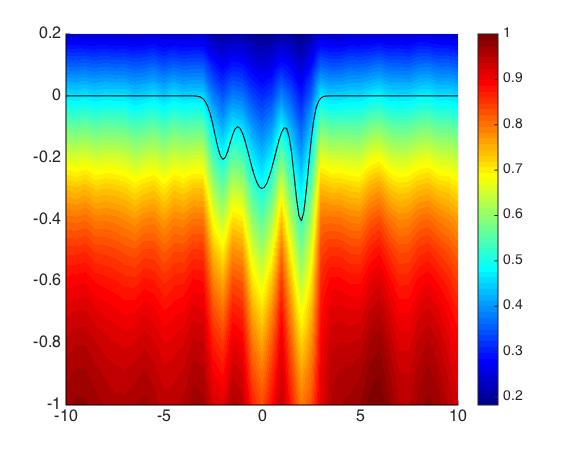}}
\caption{Reconstructions of the locally rough interface given in Example 2 from data with no noise (a),\;2\% noise (b) and 5\% noise (c)}
\label{f3} %% label for entire figure
\end{figure}

\textbf{Example 3}. In this example, the scattering interface $\G$ is described by
a complex function defined as
\begin{equation}\no
  f_3(x_1)=\left\{\begin{array}{c}
                           \exp(16/(x_1^2-16)) \sin(\pi x_1)\qquad\;\; |x_1|<4, \\[1mm]
                           \qquad\qquad\qquad\qquad\qquad\;\;\;\; 0\;\;\qquad|x_1|\geq 4.
                          \end{array}\right.
\end{equation}
The above definition shows that part of $\G$ lies above $\G_0$ and other part lies below $\G_0$.
We choose the same rectangular grid and the same measurement points as in example 1.
 As seen in Figure \ref{f4}, the inversion algorithm remains valid to give a satisfactory reconstruction for a general locally rough interface, even at the noise level of $5\%$.

\begin{figure}
\centering
\subfigure[0\%]{\includegraphics[width=0.31\textwidth]{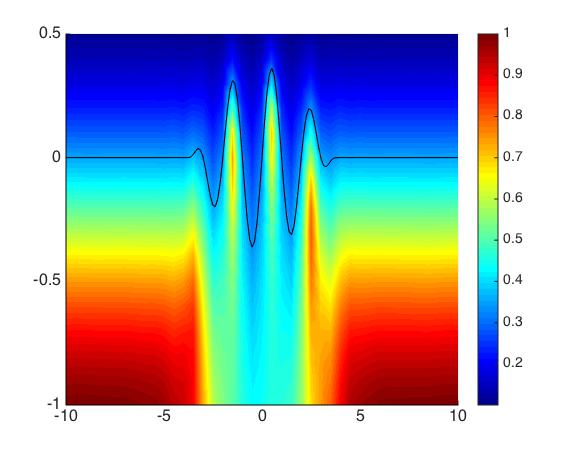}}
\subfigure[2\%]{\includegraphics[width=0.31\textwidth]{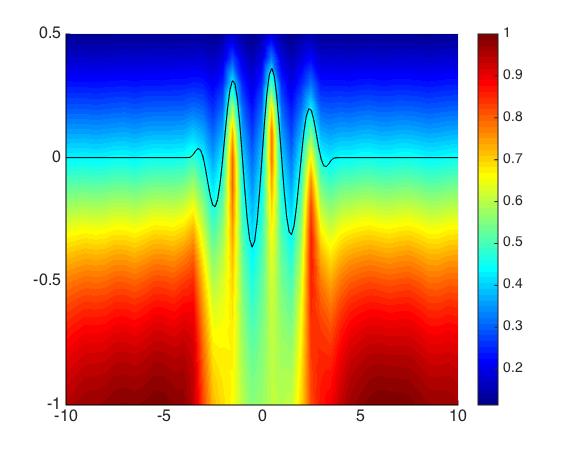}}
\subfigure[5\%]{\includegraphics[width=0.31\textwidth]{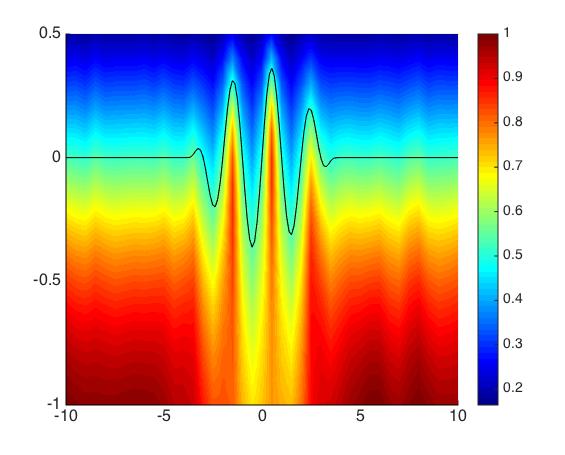}}
\caption{Reconstructions of the locally rough interface given in Example 3 from data with no noise (a), 2\% noise (b) and 5\% noise (c)}
\label{f4} %% label for entire figure
\end{figure}

Notice the inversion algorithm depends on the number $N$ of measurement points, the measurement hight $b$ and the measurement width $a$. In the following experiments, we will focus on the influence of these three parameters on the numerical performance of the inversion algorithm. To show the influence of these parameters more clearly, we work only for the case with $2\%$ noise.

\textbf{Example 4}. In this example, we choose the same interface and the same sampling points with Example 1, and exam the influence of $N$. To this end, we fix $a=15$ and $b=0.55$, and present numerical results with $N=201$, $N=401$ and $N=601$ from left to right. As shown in Figure \ref{f5}, a better reconstruction is obtained as measurement points becomes larger.

\begin{figure}
\centering
\subfigure[N=201]{\includegraphics[width=0.31\textwidth]{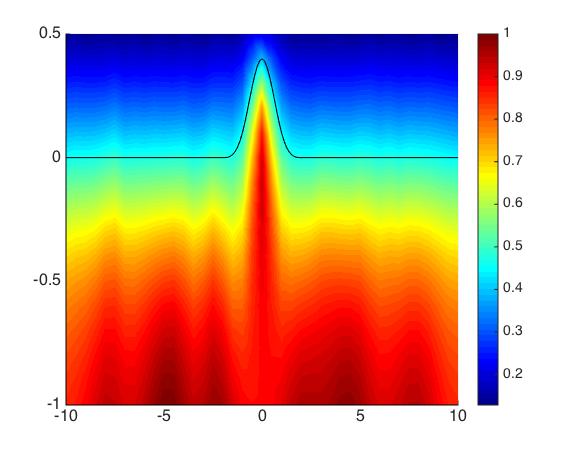}}
\subfigure[N=401]{\includegraphics[width=0.31\textwidth]{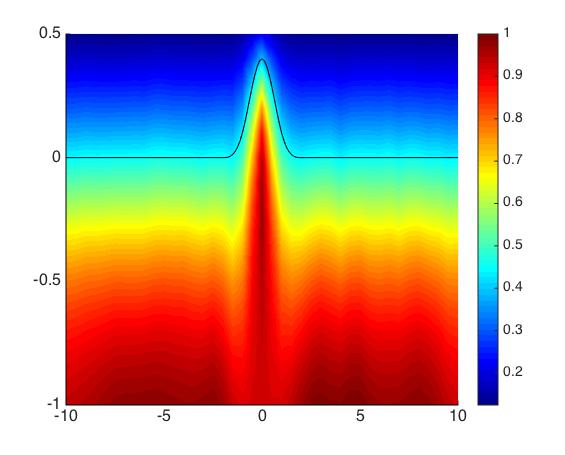}}
\subfigure[N=601]{\includegraphics[width=0.31\textwidth]{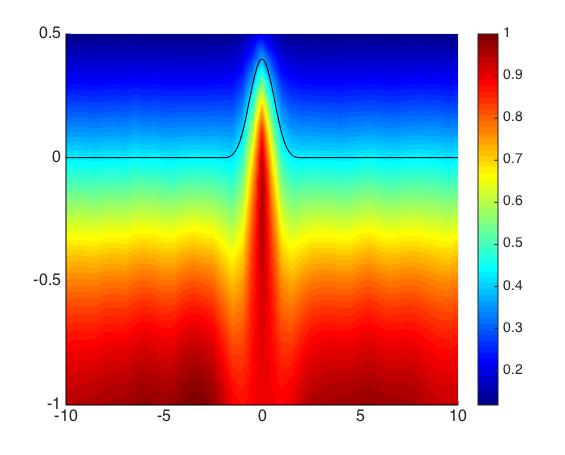}}
\caption{Reconstructions of the locally rough interface given in Example 4 from data with $N=201$ (a), $N=401$ (b) and $N=601$ (c) at the same level of 2\% noise}
\label{f5} %% label for entire figure
\end{figure}

\textbf{Example 5}. In this example, we focus on the influence of the measurement hight $b$.
We choose the same scattering interface and rectangular grid as in example 2. Then we fix $a=15$ and $N=601$, and set the measurement hight $b=0.25$, $b=0.65$, and $b=1.05$, respectively. We present the reconstruction results in Figure \ref{f6} which shows that the quality of the reconstruction becomes worse when $b$ is large.

\begin{figure}
\centering
\subfigure[b=0.25]{\includegraphics[width=0.31\textwidth]{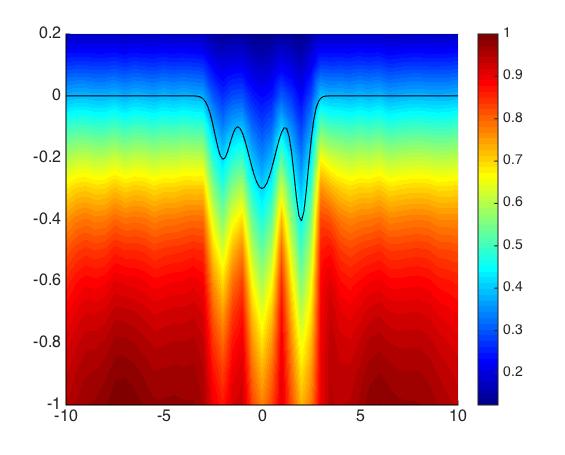}}
\subfigure[b=0.65]{\includegraphics[width=0.31\textwidth]{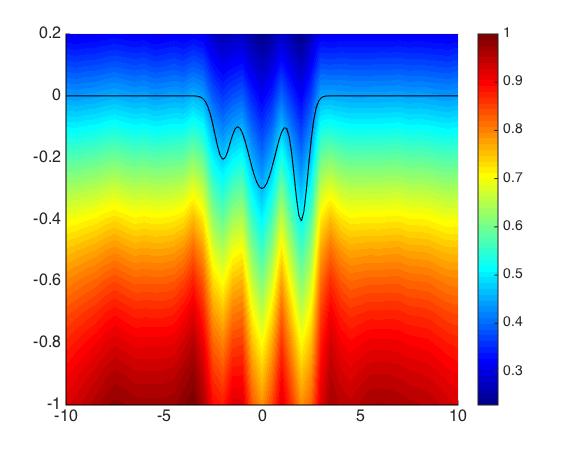}}
\subfigure[b=1.05]{\includegraphics[width=0.31\textwidth]{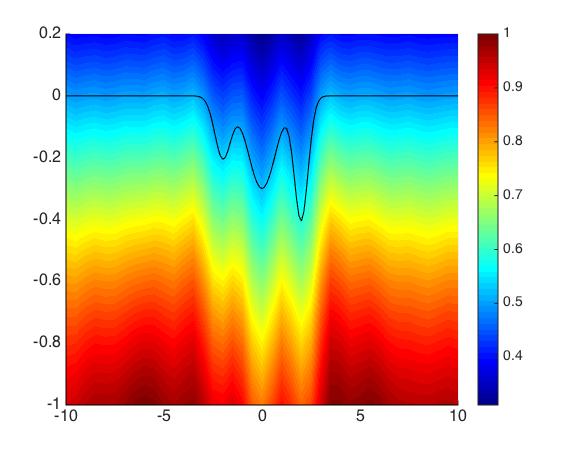}}
\caption{Reconstructions of the locally rough interface given in Example 5 from data with $b=0.25$ (a), $b=0.65$ (b) and $b=1.05$ (c) at the same level of 2\% noise}
\label{f6} %% label for entire figure
\end{figure}

\textbf{Example 6}. In this example, we test the influence of the measurement width $a$. The scattering interface is given by $f(x_1)=0.6\cdot\Omega_3(x_1)-0.4\cdot\Omega_3(x_1-5)$.
We set the same rectangular grid $S$ as in Example 1 and fix $b=0.55$ and $N=601$. Figure \ref{f7} shows the reconstructions with $a=2$, $a=8$, and $a=14$, respectively, from left to right. As shown in Figure \ref{f7}, we can recover the essential feature of the locally rough interface  very well below the measurement curve $\G_{b,a}$.

\begin{figure}
\centering
\subfigure[a=2]{\includegraphics[width=0.31\textwidth]{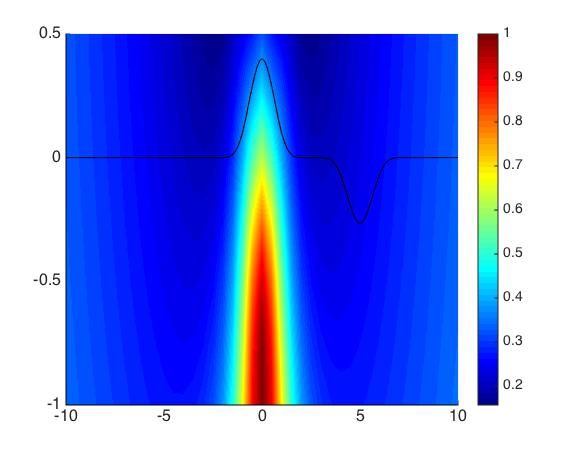}}
\subfigure[a=8]{\includegraphics[width=0.31\textwidth]{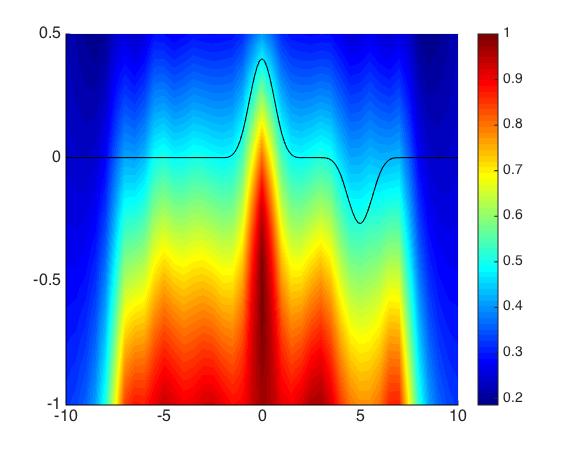}}
\subfigure[a=14]{\includegraphics[width=0.31\textwidth]{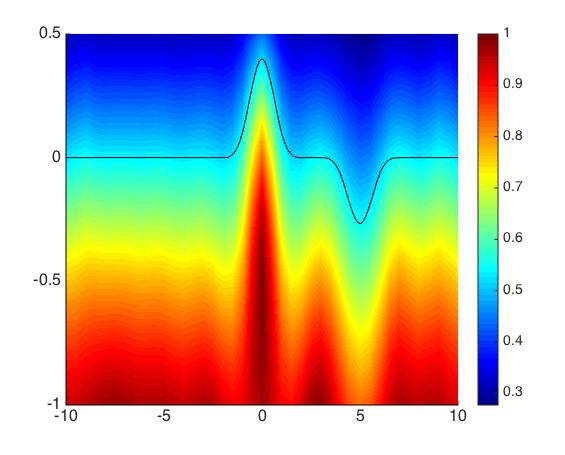}}
\caption{Reconstructions of the locally rough interface given in Example 5 from data with $a=2$ (a), $a=8$ (b) and $a=14$ (c) at the same level of 2\% noise}
\label{f7} %% label for entire figure
\end{figure}

From the results shown in Figures \ref{f2}-\ref{f4}, it is easily verified that our proposed method in Theorem \ref{thm3.3} can recover the local perturbation of the scattering interface very well at  different noise levels. In addition, it is easily seen that our method could give a better reconstruction as $a$ and $b$ become larger which corresponds to more measurement data to be available. It is also noticed that the quality of the reconstructions become worse when $b$ becomes larger. In this case the scattered field decays when the measurement hight $b$ becomes large which leads to that lesser measurement data are available in this case.

\section{Conclusion}

In this paper, we proposed an extended sampling method to recover the shape and location of
a locally rough interface by near-field measurements above the interface.
The idea is mainly based on constructing a modified near-field operator equation by reducing
the interface scattering problem into an equivalently Lippmann-Schwinger integral equation in a bounded domain,
which generalized our previous work for a locally rough surface with a Dirichlet boundary condition.
Numerical experiments showed that the proposed method can provide a stable and satisfactory reconstruction
for locally rough interfaces with different image features. However, it is not clear how to
extend our method with mathematical justification to recover a non-locally rough interface.
Moreover, it is also challenging to develop a valid sampling method to simultaneously recover an
unbounded rough interface and bounded obstacles embedded in a lower half-space, which is more interesting
in practical applications. We hope to report these works in the future.

\section*{Acknowledgements}

This work was partly supported by the NNSF of China grants 11771349, 91730306 and 11601042.


\begin{thebibliography}{99}

%\bibitem{AAY07} Y. Altuncu, I. Akduman and A. Yapar, Detecting and locating dielectric objects
%buried under a rough interface. {\em IEEE Geoscience and Remote Sensing Letters.}{\bf 4.2} (2007): 251-255.

\bibitem{BGL11} G. Bao, J. Gao, and J. Lin, Analysis of direct and inverse cavity scattering
problems, {\em Numer. Math. TMA \bf4} (2011), 335-358.

\bibitem{GL13} G. Bao and P. Li, Near-field imaging of infinite rough surfaces,
{\em SIAM J. Appl. Math. \bf73} (2013), 2162-2187.

\bibitem{GL14} G. Bao and P. Li,  Near-field imaging of infinite rough surfaces in dielectric media,
{\em SIAM J. Imaging Sci. \bf7} (2014), 867-899.

\bibitem{GJ11} G. Bao and J. Lin, Imaging of local surface displacement on an infinite ground plane:
the multiple frequency case, {\em SIAM J. Appl. Math. \bf71} (2011), 1733-1752.

\bibitem{GJ13} G. Bao and J. Lin, Near-field imaging of the surface displacement on an infinite ground plane,
{\em Inverse Problems Imaging \bf 7}(2013), 377-396.

\bibitem{CR10} C. Burkard and R. Potthast, A multi-section approach for rough surface reconstruction via
the Kirsch-Kress scheme, {\em Inverse Problems \bf 26} (2010), 045007.

\bibitem{SE10} S.N. Chandler-Wilde and J. Elschner, Variational approach in weighted Sobolev spaces to
scattering by unbounded rough surfaces, {\em SIAM J. Math. Anal. \bf 42} (2010), 2554-2580.

\bibitem{CP02} S.N. Chandler-Wilde and R. Potthast, The domain derivative in rough-surface scattering
and rigorous estimates for first-order perturbation theory, {\em Proc. Roy. Soc. A\bf 458} (2002), 2967-3001.

\bibitem{CC06}F. Cakoni and D. Colton, Qualitative Methods in Inverse Scattering Theory, Springer,
Berlin, 2006.

\bibitem{CGH10}F. Cakoni, D. Gintides and H. Haddar, The existence of an infinite discrete set of
transmission eigenvalues, {\em SIAM J. Math. Anal. \bf 42} (2010), 237-255.

\bibitem{CG11} L. Chorfi and P. Gaitan, Reconstruction of the interface between two-layered media
using far-field measurements, {\em Inverse Problems \bf27} (2011) 075001.

\bibitem{DA96} D. Colton and A. Kirsch, A simple method for solving inverse scattering problems in the
resonance region, {\em Inverse Problem \bf 12} (1996), 383-393.

\bibitem{CK13} D. Colton and R. Kress, Inverse Acoustic and Electromagnetic Scattering Theorey (Third Ed),
Springer, 2013.

\bibitem{SZ99}S.N. Chandler-Wilde and B. Zhang, Scattering of electromagnetic waves by rough interfaces
and inhomogeneous layers, {\em SIAM J. Math. Anal. \bf 30} (1999), 559-583.

\bibitem{DLLY17} M. Ding, J. Li, K. Liu and J. Yang, Imaging of locally rough surfaces by the linear
sampling method with the near-field data, {\em SIAM J. Imaging Sci. \bf10} (2017), 1579-1602.

\bibitem{HLZZ19} G. Hu, X. Liu, B. Zhang and H. Zhang, A non-iterative approach to inverse elastic
scattering by unbounded rigid rough surfaces, {\em Inverse Problems \bf35} (2019) 025007 (20pp).

\bibitem{AL08} A. Lechleiter, Factorization Methods for Photonics and Rough Surfaces, PhD thesis. KIT,
Germany, 2008.

\bibitem{LC05} C. Lines and S.N. Chandler-Wilde, A time domain point source method for inverse scattering
by rough surfaces, {\em Computing \bf75} (2005), 157-180.

\bibitem{L86} R. Leis, Initial Boundary Value Problems in Mathematical Physics, John Wiley, New York, 1986.

\bibitem{LYZ13} J. Li, G. Sun and R. Zhang, The numerical solution of scattering by infinite rough
interfaces based on the integral equation method, {\em Comput. Math. Appl. \bf71} (2016), 1491-1502.

\bibitem{LB13} J. Li and G. Sun, A nonlinear integral equation method for the inverse scattering problem
by sound-soft rough surfaces, {\em Inverse Problems Sci. Eng. \bf23} (2015), 557-577.

\bibitem{LZ13} J. Li, G. Sun and B. Zhang, The Kirsch-Kress method for inverse scattering by infinite
locally rough interfaces, {\em Appl. Anal. \bf 96} (2017), 85-107.

\bibitem{Li2010} P. Li, Coupling of finite element and boundary integral method for electromagnetic scattering
in a two-layered medium, {\em J. Comput. Phys. \bf229} (2010), 481-497.

\bibitem{LZZ18} X. Liu, B. Zhang and H. Zhang, A direct imaging method for inverse scattering by unbounded
rough surfaces, {\em SIAM J. Imaging Sci. \bf11} (2018), 1629-1650.

\bibitem{LZZ19} X. Liu, B. Zhang and H. Zhang, Near-field imaging of an unbounded elastic rough surface
with a direct imaging method, {\em SIAM J. Appl. Math. \bf79} (2019), 153-176.

\bibitem{DTS03} D. Natroshvili, T. Arens and S.N. Chandler-Wilde, Uniqueness, existence, and integral
equation formulations for interface scattering problems,
{\em Memoirs Differ. Equat. Math. Phys. \bf 30} (2003), 105-146.

\bibitem{QZZ19} F. Qu, B. Zhang and H. Zhang, A novel integral equation for scattering by locally rough
surfaces and application to the inverse problem: The Neumann case,
{\em SIAM J. Sci. Comput. \bf41} (2019), A3673-A3702.

\bibitem{RM15} D.G. Roy and S. Mudaliar, Domain derivatives in dielectric rough surface scattering,
{\em IEEE Trans. Antennas Propagation \bf63} (2015), 4486-4495.

\bibitem{MT06} M. Thomas, Analysis of Rough Surface Scattering Problems, PhD Thesis.
The University of Reading, UK, 2006.

\bibitem{XZZ19} X. Xu, B. Zhang and H. Zhang, Uniqueness and direct imaging method for inverse scattering
by locally rough surfaces with phaseless near-field data, {\em SIAM J. Imaging Sci. \bf12} (2019), 119-152.

\bibitem{YLZ17} J. Yang, J. Li and B. Zhang, Simultaneous recovery of a locally rough interface
and its buried obstacles and homogeneous medium, in preparation.

\bibitem{YZZ1}J. Yang, B. Zhang and R. Zhang, A sampling method for the inverse transmission problem
for periodic media, {\em Inverse Problems \bf28} (2012) 035004.

\bibitem{YZZ2}J. Yang, B. Zhang and R. Zhang, Reconstruction of penetrable grating profiles,
 {\em Inverse Problems Imaging \bf7} (2013), 1393-1407.

\bibitem{ZS03}B. Zhang and S.N. Chandler-Wilde, Integral equation methods for scattering by
infinite rough surfaces, {\em Math. Methods Appl. Sci. \bf26} (2003), 463-488.

\bibitem{ZZ13} H. Zhang and B. Zhang, A novel integral equation for scattering by locally rough surfaces
and application to the inverse problem, {\em SIAM J. Appl. Math. \bf73} (2013), 1811-1829.

\bibitem{Zhang20} H. Zhang, Recovering unbounded rough surfaces with a direct imaging method,
{\em Acta Mathematicae Applicatae Sinica, English Series \bf36} (2020), 119-133.


\end{thebibliography}
\end{document}